\newcommand{\includepathfigure}{\includegraphics{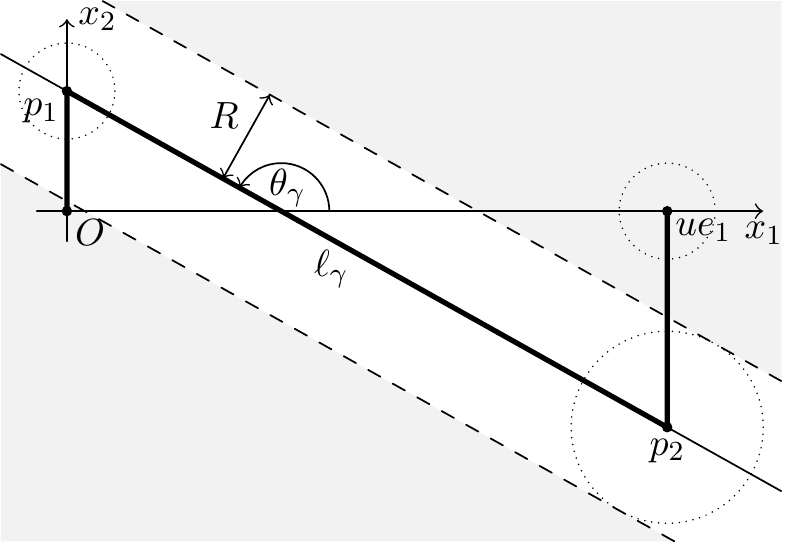}}
\numberwithin{equation}{section}
\newtheorem{COUNT}{}[section]
\theoremstyle{plain}
\newtheorem{theorem}[COUNT]{Theorem}
\newtheorem{proposition}[COUNT]{Proposition}
\newtheorem{lemma}[COUNT]{Lemma}
\newtheorem*{lemma*}{Lemma}
\newtheorem{corollary}[COUNT]{Corollary}
\theoremstyle{definition}
\newtheorem*{example*}{Example}
\theoremstyle{remark}
\def\skipassumptiona{.3cm}
\def\tabspacing{1.5cm}
\def\assumptionapplications#1#2{\vskip \skipassumptiona
\noindent
\hangindent=\tabspacing
\hangafter=1
\makebox[\tabspacing][l]{#1}#2
\vskip \skipassumptiona}
\def\AAa{\mathcal{A}}
\def\BB{\mathcal {B}}
\def\DD{\mathcal {D}}
\def\FF{\mathcal {F}}
\def\GG{\mathcal {G}}
\def\LL{\mathcal {L}}
\def\MM{\mathcal {M}}
\def\C{\mathbb {C}}
\def\D{\mathbb {D}}
\def\E{\mathbb {E}}
\def\F{\mathbb {F}}
\def\N{\mathbb {N}}
\renewcommand{\P}{\mathbb {P}} 
\def\R{\mathbb {R}}
\def\T{\mathbb {T}}
\def\Z{\mathbb {Z}}
\DeclareMathOperator{\Cov}{Cov}
\DeclareMathOperator{\supp}{supp}
\DeclareMathOperator{\Laplace}{{\Delta}}
\def\leb{\mathscr{L}}
\def\g{\kappa}
\def\modulo{\mathrm{mod\ }}
\def\vmax{{v_{\mathrm{max}}}}
\def\vmin{{v_{\mathrm{min}}}}
\def\FFF{\mathfrak{F}}
\def\DDD{\mathfrak{D}}
\def\PPPhi{\mathfrak{P}}
\def\condT{(T)}
\newcommand{\tor}[1]{{\T,#1}}
\newcommand{\poi}{{\mathrm{poi},\nu}}
\newcommand{\rep}{\textbf C}
\newcommand{\constpot}{c}
\newcommand{\imaginary}{\iota}
\newcommand{\middleconst}{\zeta}
\newcommand{\HHa}{H_1(p_1)}
\newcommand{\HHb}{H^{(2)}}
\newcommand{\HHc}{H^{(3)}}
\newcommand{\exrep}{\hat\rep}
\newcommand{\exOmega}{\hat\Omega}
\newcommand{\exP}{\hat\P}
\newcommand{\extau}{\hat\tau}
\newcommand{\exV}{\hat V}
\newcommand{\exW}{\hat W}
\newcommand{\exE}{\hat \E}
\newcommand{\exFF}{\hat\FF}
\newcommand{\excontV}{\tilde V}
\newcommand{\excontW}{\tilde W}
\renewcommand{\MR}[1]{}
\begin{document}

\title[Continuity Results and Estimates for the Lyapunov Exponent]{Continuity Results and Estimates for the\\
Lyapunov Exponent of Brownian Motion\\
in Random Potential}

\author{Johannes Rue\ss}
\address{
Mathematisches Institut, Eberhard Karls Universit\"at T\"ubingen\\
Auf der Morgenstelle 10, 72076 T\"ubingen, Germany
}

\subjclass[2010]{60K37, 82B44, 37A60}
\keywords{
Brownian motion, Random potential, Lyapunov exponent, Green function}

\begin{abstract}
We collect some applications of the variational formula established in \cite{Schroeder88, Ruess2013} for the quenched Lyapunov exponent of Brownian motion in stationary and ergodic nonnegative potential. We show for example that the Lyapunov exponent for nondeterministic potential is strictly lower than the Lyapunov exponent for the averaged potential. The behaviour of the Lyapunov exponent under independent perturbations of the underlying potential is examined. And with the help of counterexamples we are able to give a detailed picture of the continuity properties of the Lyapunov exponent.
\end{abstract}

\maketitle

\section{Introduction}

In \cite{Schroeder88, Ruess2013} a variational formula has been established for the exponential decay rate of the Green function of Brownian motion evolving in a stationary and ergodic nonnegative potential. The purpose of this article is to collect some applications of this variational formula. A special focus is laid on continuity properties of the Lyapunov exponent. We give counterexamples in the last section in order to complete the picture.

We consider Brownian motion in $\R^d$, $d \in \N$. Let $P_x$ be the law of standard Brownian motion with start in $x \in \R^d$ on the space $\Sigma:=C([0,\infty),\R^d)$ equipped with the $\sigma$-algebra generated by the canonical projections, and let $E_x$ be the associated expectation operator. With $(Z_t)_{t\geq 0}$ we denote the canonical process on $\Sigma$.

We assume that the Brownian motion is moving in a random potential:
Let $(\Omega,\FF,\P)$ be a probability space and assume $(\R^d,+)$ is acting as a group on $\Omega$ via $\tau: \R^d \times \Omega \to \Omega$, $(x,\omega)\mapsto \tau_x\omega$. We always assume that $X:=(\Omega, \FF,\P,\tau)$ is a metric dynamical system, which means that $\tau$ is product measurable and $\P$ is invariant under $\tau_x$ for all $x\in \R^d$. Often $\P$ is required to be ergodic under $\{\tau_x:\, x\in\R^d\}$. Then $X$ is called ergodic dynamical system. We denote the space of $p$-integrable functions on $\Omega$ by $L^p$, $p \geq 1$. Any nonnegative $V\in L^1$ is called \textit{potential} throughout this article.

Let $V$ be a potential. We assume that Brownian motion $Z$ is killed at rate $V$: Introduce the Green function as
\begin{align*}
g(x,y,\omega):= \int_0^\infty p^t(x,y)E_{x,y}^t[\exp\{-\int_0^t V(\tau_{Z_s}\omega) ds\}]dt,
\end{align*}
where $x,y\in \R^d$, $\omega \in \Omega$, $E_{x,y}^t$ denotes the Brownian bridge measure, and $p^t(x,y)$ is the transition probability density of Brownian motion in $\R^d$. $g$ can be interpreted as density for the expected occupation times measure for Brownian motion killed at rate $V$.
Under natural assumptions the Green function is the fundamental solution to
\begin{align*}
-\frac{1}{2}\Laplace g(x,\cdot,\omega) + V_\omega g(x,\cdot,\omega)= \delta_x,
\end{align*}
where $\delta_x$ denotes the Dirac measure at $x \in \R^d$, see e.g.\ \cite[Theorem 4.3.8]{Pinsky1995}.

If $X$ is an ergodic dynamical system and $V$ satisfies certain boundedness and regularity assumptions, then it is shown in \cite[Theorem 1.2]{Ruess2013} that the Green function decays exponentially fast with an deterministic exponential decay rate, called Lyapunov exponent, see also Theorem \ref{app:t:varform}. We refer the reader to \cite{Ruess2013} for the exact assumptions on $V$ needed for this result and we call as in \cite{Ruess2013} a potential satisfying these assumptions shortly a \textit{regular potential}.

Deterministic exponential decay has been shown previously for example for periodic potentials in \cite{Schroeder88}, and for Poissonian potentials in \cite{Sznitman94}. \cite{Armstrong2012} gives analogous results in the context of Hamilton-Jacobi-Bellman equations. In discrete space, \cite{Zerner98} and \cite{Mourrat11} establish existence of Lyapunov exponents for random walks in random potentials.

Measurable functions $f$ on $\Omega$ give rise to functions $f_\omega$ on $\R^d$, called realisations of $f$, defined by $f_\omega(x)=f(\tau_x \omega)$ for $x \in \R^d$ and $\omega \in \Omega$. If $f_\omega$ is differentiable for all $\omega \in \Omega$ we call $f$ (classically) differentiable and we denote the derivative by $(Df)(\omega):=D(f_\omega)(0)$. Let $y \in \R^d$. We recall the variational expression as introduced in \cite[(1.4)]{Ruess2013}, 
\begin{align*}
\Gamma_V(y):=2 \inf_{f \in \F} \left[ \left( \int \frac{|\nabla f|^2}{8f} + Vf d\P \right) \left( \inf_{\phi \in \Phi_y} \int \frac{|\phi|^2}{2f} d\P \right) \right]^{1/2}.
\end{align*}
Here the space $\F$ is the space of probability densities $f \in L^1$ with the following properties:
\begin{compactitem}\label{def:F_s}
\item $\E f = 1$ and there exists $c_f>0$ such that $f \geq c_f$,
\item $f_\omega$ is differentiable of any order for all $\omega$, and $\sup_\Omega |D^n f| < \infty$ for $n \in \N_0$.
\end{compactitem}
The space $\Phi_y$ is the space of divergence-free vector fields $\phi \in (L^1)^d$ such that:
\begin{compactitem}\label{def:P^s}
\item $\phi_\omega$ is differentiable of any order for all $\omega$, and $\sup_\Omega|D^n\phi|<\infty$ for all $n \in \N_0$,
\item $\E \phi = y$, and $\nabla \cdot \phi = 0$ for all $\omega$.
\end{compactitem}

One has the following representation of the Lyapunov exponent:

\begin{theorem}[{\cite[(1.1)]{Schroeder88}, \cite[Theorem 1.2]{Ruess2013}}]\label{app:t:varform}
If $X$ is an ergodic dynamical system and $V$ a \textit{regular potential}, then for all $y\in \R^d \setminus \{0\}$ $\P$-a.s.\ the limit in the following exists and is given as
\begin{align*}
\alpha_V(y):= \lim_{r \to \infty} - \frac{1}{r} \ln g(0,ry,\omega) = \Gamma_V(y).
\end{align*}
\end{theorem}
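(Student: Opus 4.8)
\emph{Overall strategy.} The plan is to separate two claims: (i) that the limit $\alpha_V(y)=\lim_{r\to\infty}-\tfrac1r\ln g(0,ry,\omega)$ exists $\P$-a.s., is nonrandom, and extends to a norm in $y$; and (ii) that $\alpha_V(y)=\Gamma_V(y)$, which splits into $\alpha_V(y)\le\Gamma_V(y)$ and $\alpha_V(y)\ge\Gamma_V(y)$. For (i) the key analytic input is an approximate submultiplicativity of the Green function: whenever $|x-y|\wedge|y-z|\ge1$ one has $g(x,z,\omega)\ge c\,g(x,y,\omega)\,g(y,z,\omega)$, which follows by decomposing the occupation density at the hitting time of the unit ball around $y$ (strong Markov property / Chapman--Kolmogorov) and invoking the Harnack inequality, legitimate because a \emph{regular potential} is bounded. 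Then $(x,z)\mapsto-\ln g(x,z,\omega)$ is subadditive up to an additive constant along the action of $\tau$, so the multiparameter subadditive ergodic theorem produces the $\P$-a.s.\ limit; the limit is invariant under every $\tau_a$ (translating both endpoints by a bounded vector changes $g$ only by a bounded factor), hence deterministic by ergodicity of $X$, and subadditivity in the endpoint yields the triangle inequality, positive homogeneity and continuity of $y\mapsto\alpha_V(y)$. In particular homogeneity reduces (ii) to unit vectors $y$.

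\emph{Shape of the variational formula and the bound $\alpha_V\ge\Gamma_V$.} Writing $\psi:=\sqrt f$ for $f\in\F$, so that $\E\psi^2=1$, the first factor of $\Gamma_V$ equals $\int\bigl(\tfrac{|\nabla f|^2}{8f}+Vf\bigr)d\P=\int\bigl(\tfrac12|\nabla\psi|^2+V\psi^2\bigr)d\P$, the quadratic form of $-\tfrac12\Laplace+V$ at the stationary, $L^2(\P)$-normalised function $\psi$; call it $A(f)$, and call the second factor $R(f,y):=\inf_{\phi\in\Phi_y}\int\tfrac{|\phi|^2}{2f}d\P$. After the ground-state ($h$-)transform by $\psi$ the transformed diffusion on $\Omega$ has invariant measure $f\,d\P$ --- which is why $\F$ consists of probability densities bounded below --- and the divergence-free fields in $\Phi_y$ are exactly the drifts that may be added without changing this invariant measure, $R(f,y)$ being the minimal energy of such a drift with effective velocity $y$. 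The geometric-mean form $2[A(f)\,R(f,y)]^{1/2}=\inf_{s>0}\bigl(sA(f)+s^{-1}R(f,y)\bigr)$ then reflects optimisation over the time scale on which the killed motion realises the crossing from $0$ to $ry$. Concretely I would prove $\alpha_V(y)\ge\Gamma_V(y)$ by producing supersolutions: for an admissible tilt $\lambda$ one solves a corrector-type elliptic equation on $\Omega$ for a strictly positive stationary $w$ with $-\tfrac12\Laplace w_\omega+V_\omega w_\omega\ge0$ and realisations of exponential growth $e^{\langle\lambda,\cdot\rangle}$; then $w_\omega(Z_t)\exp\{-\int_0^tV(\tau_{Z_s}\omega)\,ds\}$ is a supermartingale, which upon localisation near $ry$ gives $g(0,ry,\omega)\le C(\omega)e^{-\langle\lambda,ry\rangle}$ and hence $\alpha_V(y)\ge\langle\lambda,y\rangle$. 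Dualising the defining inequalities of the set of admissible $\lambda$ and taking its support function in direction $y$ produces exactly the infimum $\Gamma_V(y)$.

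\emph{The bound $\alpha_V\le\Gamma_V$.} For the reverse inequality I would use a change of measure. Fix $(f,\phi)\in\F\times\Phi_y$ nearly attaining the infimum in $\Gamma_V(y)$ and tilt the Brownian motion by the drift $b_\omega:=\tfrac12\nabla\log f_\omega+\phi_\omega/f_\omega$, the gradient part implementing the ground-state transform and the divergence-free part producing an effective drift in direction $y$. Expressing $g(0,ry,\omega)$ under the tilted path measure via Girsanov and Feynman--Kac, and estimating the accumulated exponential cost (potential term, It\^o correction from $b_\omega$, terminal contribution) by the ergodic theorem along the drifted trajectory over a time horizon of the optimal order $r/s$, one obtains $g(0,ry,\omega)\ge e^{-r\Gamma_V(y)-o(r)}$; the smoothness and uniform bounds built into $\F$ and $\Phi_y$ are what make the stochastic calculus and the ergodic averages rigorous. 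Optimising over $(f,\phi)$ gives $\alpha_V(y)\le\Gamma_V(y)$.

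\emph{Main obstacle.} I expect the crux to be the lower bound on the decay rate, $\alpha_V\ge\Gamma_V$, in the merely ergodic (non-periodic, non-compact) setting: constructing the stationary supersolutions/correctors on $\Omega$ for tilts up to the extremal one, and --- relatedly --- showing that restricting the variational problem to the \emph{smooth, uniformly bounded, bounded-below} classes $\F$ and $\Phi_y$ does not lower its value. This forces a careful regularisation (mollification on $\Omega$, small perturbations of $V$, adding $\varepsilon$ to densities to keep them bounded below) together with a density argument for these test-function classes, which is the technical heart of \cite{Ruess2013}.
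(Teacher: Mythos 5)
You should first be aware that Theorem \ref{app:t:varform} is not proved in this paper at all: it is imported verbatim from \cite{Schroeder88} (periodic case) and \cite[Theorem 1.2]{Ruess2013}, and the present article only \emph{applies} the formula. So there is no internal proof to measure your argument against; the only meaningful comparison is with \cite{Ruess2013}, whose strategy (as far as it is visible through the results quoted here, e.g.\ the travel-cost representation \cite[(1.9)]{Ruess2013}, the drift/free-energy duality \cite[Corollary 1.4]{Ruess2013}, and the directional representation via \cite[Propositions 3.13, 3.15]{Ruess2013}) is indeed close in spirit to your outline: existence of a deterministic decay rate by subadditivity, one inequality by a ground-state ($h$-)transform plus a Girsanov tilt with an auxiliary divergence-free drift, the other by exponentially growing stationary (super)solutions and duality.

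As a proof, however, your proposal is a programme rather than an argument, and the gaps sit exactly at the decisive points. (a) In the lower bound you assert that the support function of your set of admissible tilts $\lambda$ coincides with $\Gamma_V(y)$; that identification \emph{is} the variational formula, and nothing in the sketch constructs the stationary supersolutions with prescribed exponential growth on a general ergodic $\Omega$ or carries out the claimed dualisation. (b) In the upper bound you need the environment seen from the tilted particle to be ergodic with invariant density $f\,d\P$, uniform control of the Girsanov and Feynman--Kac costs along a time horizon of order $r$, and a computation that actually produces the geometric-mean structure $2[A(f)R(f,y)]^{1/2}$ after optimising the time scale; each of these is stated as an expectation, not derived, and converting a point-to-ball travel-cost estimate back into a bound on $g(0,ry,\omega)$ also needs an argument. (c) The subadditive-ergodic-theorem step requires integrability of $\ln g$ (two-sided Green function bounds sufficiently uniform in $\omega$) and a multiparameter/directional argument to obtain a norm; this is not automatic for unbounded, merely ergodic media. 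You correctly identify these issues as the technical heart of \cite{Ruess2013} and of the density statements for $\F$ and $\Phi_y$, but acknowledging them does not discharge them, so the proposal cannot be accepted as a proof of the theorem--- which, to repeat, this paper itself deliberately does not attempt.
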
 

In the present article we derive properties of the Lyapunov exponent $\alpha_V$ from its variational representation $\Gamma_V$. We state the results in the following for the variational expression $\Gamma_V$ having in mind that as soon as the underlying dynamical system is ergodic and the considered potentials are \textit{regular} these results do hold by Theorem \ref{app:t:varform} for $\alpha_V$ as well.

\subsection*{Notation}
At some points we use notation from \cite{Ruess2013} and in order to keep this note compact we introduce several objects in a short way and refer the reader to the first two sections of \cite{Ruess2013} for more detailed descriptions.

We denote by $S^{d-1}$ the set of unit vectors in $\R^d$. The Lebesgue measure on $\R^d$ is denoted by $\leb$. We write $|\cdot|$ for the euclidean norm on $\R^k$, $k \in \N$.

We need the concept of weak differentiability on $X$: A measurable function $f:\Omega \to \R^d$ is called weakly differentiable in direction $i$ if $\P$-a.e.\ realisation of $f$ is weakly differentiable in direction $i$, and if there exists a measurable function $g$ on $\Omega$ such that $\P$-a.s.\ $\leb$-a.e.\ $g_\omega=\partial_i(f_\omega)$. Then $g$ is called the weak derivative $\partial_i f$ of $f$ in direction $i$. The weak derivative is uniquely determined $\P$-a.s., and coincides with the classical derivative if the realisations of $f$ are classically differentiable. We have the differential operator $\nabla f = (\partial_i f)_i$, if the weak derivatives in any direction exist. We introduce
\begin{align*}
\DD(\partial_i):=\{f \in L^2:\, f \text{ weakly differentiable in direction $i$}, \, \partial_if \in L^2\}.
\end{align*}
On $\bigcap_i\DD(\partial_i)$ we have the norm $\|f\|_\nabla:= \|f\|_2+\sum_i\|\partial_if\|_2$. In addition to $\F$ and $\Phi_y$ we need the following function spaces: Let $y \in \R^d$, define $\D_w:=\bigcap_{i=1}^d \DD(\partial_i)$, and
\begin{itemize}[]
\item $\D:=\{f \in L^1:\; f_\omega \in C^\infty(\R^d) \; \forall \omega \in \Omega, \; \sup_\Omega |D^nf|<\infty \; \forall n \in \N_0\}$,
\item $\DDD:=\{D \subset \D_w:\; D \text{ dense in } \D_w \; \text{w.r.t.}\; \|\cdot\|_\nabla\}$,
\item $\F_w:=\{f \in \D_w:\; \E f = 1, \; \exists\; c_f>0 \; \text{s.t.}\; f>c_f \;\P\text{-a.s.},\; \|f\|_\infty, \|\nabla f\|_\infty<\infty\}$,
\item $\FFF:=\{F \subset \F_w:\; \forall f \in \F_w \; \exists (f_n)_n \subset F \; \text{and}\; c>0\; \text{s.t.}$\\
\hspace*{2cm} $f_n \to f \;\text{w.r.t.}\; \|\cdot\|_\nabla \text{ and } \inf_n f_n>c \; \P\text{-a.s.}\}$,
\item $\Phi_y^w:=\{\phi \in (L^2)^d: \; \E[\phi \cdot \nabla w]=0 \; \forall w \in \D, \; \E\phi = y\}$,
\item $\PPPhi_y:=\{\phi_y \subset \Phi_y^w: \, \phi_y \text{ dense in } \Phi_y^w \text{ w.r.t. } \|\cdot\|_2\}$.
\end{itemize}

\subsection*{Examples}\label{app:subsec:examples}
We give two main examples for dynamical systems $X=(\Omega,\FF,\P,\tau)$ which fit into our framework. As a special example in Section \ref{sec:counterex} we encounter the Poisson line process.

\textit{$X^{\tor{d}}$ - The $d$-dimensional torus} $\T^d$: Choose $\Omega:=\T^d$, let $\FF:=\BB(\T^d)$ be the Borel $\sigma$-algebra on $\T^d$, and set $\tau_x\omega:= \omega + x (\modulo{} 1)$ for $x \in \R^d$, $\omega \in \T^d$. With $\P$ being the Lebesgue measure $\leb$ the dynamical system $X$ becomes stationary and ergodic.

\textit{Stationary ergodic random measures}: Let $\Omega:=\MM(\R^d)$ be the set of locally finite measures on $(\R^d, \BB(\R^d))$ equipped with the topology of vague convergence. Let $\FF$ be the Borel $\sigma$-algebra on $\Omega$, and set $\tau_x\omega[A]:= \omega[A+x]$ for $x \in \R^d$, $A \in \BB(\R^d)$ and $\omega \in \Omega$. Then for any distribution $\P$ of a stationary ergodic random measure on $(\MM(\R^d),\FF)$ the dynamical system $X$ becomes an ergodic dynamical system, use \cite[Exercise 12.1.1(a)]{Daley2008}. An easy way to construct potentials on $\Omega$ is to choose a `shape' function $W:\R^d \to [0,\infty)$ and set
\begin{align*}
V(\omega):=\int_{\R^d} W(x) \omega(dx).
\end{align*}
If $\P$ is a Poisson point process with constant intensity this leads to the so called Poissonian potentials, see \cite{Sznitman98}. We denote such a dynamical system where $\P$ is a Poisson point process with constant intensity $\nu>0$ by $X^{\poi}$.

\section{Elementary Properties} \label{subsec:basicproperties}

We deduce elementary properties of $\Gamma_V$:

\begin{proposition}\label{prop:application_basic}
Assume $V$ is a potential. For $c\geq 0$, for $y \in \R^d$,
\begin{align}\label{app:e:linearity}
\Gamma_V(cy) = c \Gamma_V(y).
\end{align}
Let $c \geq 1$, then
\begin{align}\label{e:cV}
\Gamma_{cV}^2 \leq c \Gamma_V^2.
\end{align}
Analogously if $0 \leq c \leq 1$, one has $\Gamma_{cV}^2 \geq c \Gamma_V^2$. In constant potential $c\geq 0$, for $y \in \R^d$,
\begin{align}\label{e:lyapunovExponent_forconstpot}
\Gamma_c(y) = \sqrt{2c} |y|.
\end{align}
$\Gamma$ is concave in the following sense: Let $\lambda_i$, $1 \leq i \leq k$, be positive real numbers s.t.\ $\sum_{i=1}^k \lambda_i = 1$. Let $V_1, \ldots V_k$ be potentials on $\Omega$, then
$\Gamma_{\sum_{i=1}^k \lambda_i V_i}^2 \geq \sum_{i=1}^k \lambda_i \Gamma_{V_i}^2$,
in particular,
\begin{align} \label{e:concave}
\Gamma_{\sum_{i=1}^k \lambda_i V_i} \geq \sum_{i=1}^k \lambda_i \Gamma_{V_i}.
\end{align}
For sums of constant potential $c$ and some other potential $V$
\begin{align}\label{e:c+Vgeq}
\Gamma_{c+V}^2 \geq \Gamma_V^2 + \Gamma_c^2.
\end{align}
If $\sigma_s(0):=\inf_{f \in \F} \E[|\nabla f|^2/(8f)+Vf]>0$, we have for $x$, $y \in \R^d$,
\begin{align}\label{e:alpha_triangle}
\Gamma_V(x+y) \leq \Gamma_V(x)+\Gamma_V(y).
\end{align}
$\Gamma_V$ is monotone in $V$: Assume $V_1 \leq V_2$ are potentials, then
\begin{align}\label{eq:Gamma_monotone}
\Gamma_{V_1} \leq \Gamma_{V_2}.
\end{align}
For $y \in \R^d \setminus\{0\}$,
\begin{align}\label{eq:comparison_with_qfe}
\Gamma_V^2(y/|y|) \geq 2 \sigma_s(0).
\end{align}
\end{proposition}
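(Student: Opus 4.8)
The plan is to read the bound off directly from the variational formula for $\Gamma_V$, the only analytic input being a single application of the Cauchy--Schwarz inequality. Write $e := y/|y| \in S^{d-1}$ and, for $f \in \F$, put $A(f) := \int |\nabla f|^2/(8f) + Vf \, d\P$ and $B(f) := \inf_{\phi \in \Phi_e} \int |\phi|^2/(2f)\, d\P$, so that $\Gamma_V^2(e) = 4\inf_{f \in \F} A(f)B(f)$. Since $f \geq c_f > 0$ and $V \geq 0$ we have $A(f) \geq 0$, and $\inf_{f \in \F} A(f) = \sigma_s(0)$ by definition. Hence it suffices to show that $B(f) \geq 1/2$ for every $f \in \F$: this gives $A(f)B(f) \geq A(f)/2$ for each $f$, and taking the infimum over $f$ yields $\Gamma_V^2(e) \geq 4 \cdot \tfrac12 \sigma_s(0) = 2\sigma_s(0)$.

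To establish $B(f) \geq 1/2$, fix $f \in \F$ and $\phi \in \Phi_e$; all expectations below are finite because $f$ is bounded away from zero and $\phi$ is bounded. Using $|e| = 1$, $\E\phi = e$ (so that $\E[e\cdot\phi] = e\cdot e = 1$), $\E f = 1$, and Cauchy--Schwarz on $(\Omega,\FF,\P)$,
\[
1 = \bigl( \E[e\cdot\phi] \bigr)^2 = \left( \E\!\left[ \tfrac{e\cdot\phi}{\sqrt f}\cdot\sqrt f \right] \right)^2 \leq \E\!\left[ \frac{(e\cdot\phi)^2}{f} \right]\E[f] = \E\!\left[ \frac{(e\cdot\phi)^2}{f} \right] \leq \E\!\left[ \frac{|\phi|^2}{f} \right],
\]
where the last inequality is $(e\cdot\phi)^2 \leq |e|^2|\phi|^2 = |\phi|^2$. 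Thus $\int |\phi|^2/(2f)\, d\P \geq 1/2$ for every admissible $\phi$, and taking the infimum over $\phi \in \Phi_e$ gives $B(f) \geq 1/2$.

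I do not expect a genuine obstacle here: the argument is a one-line Cauchy--Schwarz estimate, and the only points that need a word of justification are the finiteness of the expectations (guaranteed by $f \geq c_f$ and the boundedness built into the definition of $\Phi_e$) and the observation that the normalisation $|e| = 1$ is precisely what makes the constant $1/2$ come out; for a general $y$ the same computation gives $\Gamma_V^2(y) \geq 2|y|^2\sigma_s(0)$, in agreement with the homogeneity \eqref{app:e:linearity}.
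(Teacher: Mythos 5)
Your Cauchy--Schwarz argument is correct, and it is in substance the paper's own key estimate: the paper proves $\inf_{\phi \in \Phi_y}\E[|\phi|^2/(2f)] \geq |y|^2/2$ (its inequality \eqref{eq:B_lowerbound}) via an ``inverse'' H\"older inequality with $r=2$ -- which is exactly Cauchy--Schwarz applied to $|\phi|/\sqrt f\cdot\sqrt f$ -- followed by Jensen's inequality $\E|\phi|\geq|\E\phi|$; you project onto the unit direction $e$ instead of using $|\phi|$, which is an equally valid route to the same constant, and you then combine it with $\inf_{f\in\F}A(f)=\sigma_s(0)$ just as the paper does when it says ``for \eqref{eq:comparison_with_qfe} use \eqref{eq:B_lowerbound} and \eqref{app:e:linearity}''. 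So for that particular inequality there is nothing to object to.

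The genuine gap is one of coverage: the statement to be proved is the whole of Proposition \ref{prop:application_basic}, and your proposal establishes only its final inequality \eqref{eq:comparison_with_qfe}. Nothing is said about \eqref{e:cV} and its counterpart for $0\leq c\leq 1$ (which follow from the pointwise comparison $\int |\nabla f|^2/(8f)+cVf\,d\P \leq c\int |\nabla f|^2/(8f)+Vf\,d\P$), about \eqref{e:lyapunovExponent_forconstpot} (your bound gives only the lower half $\Gamma_c^2(y)\geq 2c|y|^2$; the matching upper bound needs the test choice $f\equiv 1$, $\phi\equiv y$), about the squared concavity and \eqref{e:concave}, about \eqref{e:c+Vgeq}, about monotonicity \eqref{eq:Gamma_monotone}, or about the triangle inequality \eqref{e:alpha_triangle}. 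The last of these in particular cannot be extracted from your estimate: in the paper it rests on a different representation of $\Gamma_V$ as a supremum over directions $\eta\in S^{d-1}$ of $|\langle \cdot,\eta\rangle|\inf_{f}[2K(f)/H(\eta,f)]^{1/2}$ (citing \cite{Ruess2013}), and this is where the hypothesis $\sigma_s(0)>0$ enters. Your lemma $B(f)\geq |y|^2/2$ is indeed the workhorse behind several parts (\eqref{e:lyapunovExponent_forconstpot}, \eqref{e:c+Vgeq}, \eqref{eq:comparison_with_qfe}), but as written the proposal proves one of the proposition's eight claims, not the proposition.
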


For completeness we restated \eqref{app:e:linearity} which is shown in \cite[Lemma 3.1]{Ruess2013}. Many of these properties are already established for the Lyapunov exponent of Brownian motion in Poissonian potential, see e.g.\ \cite[Chapter 5]{Sznitman98}. In the discrete space setting of random walk in random potential such results are obtained in \cite[Proposition 4]{Zerner98}. Formula \eqref{e:lyapunovExponent_forconstpot} for the Lyapunov exponent of constant potential is well known, a calculation can be found in \cite[(2.9)]{Ruess2012}.

Inequality \eqref{e:cV} is stronger than inequality $\alpha_{cV} \leq c\alpha_V$, $c \geq 1$, which one obtains applying Jensen inequality to the representation of the Lyapunov exponent given in \cite[(1.9)]{Ruess2013}. This here allows to deduce the correct asymptotics given in \eqref{e:convspeed}. In the same way \eqref{e:concave} could be deduced with Hölder inequality from \cite[(1.9)]{Ruess2013}. The `squared' inequality however is a stronger result.

Inequality \eqref{eq:comparison_with_qfe} can be interpreted as a relation between the Lyapunov exponent and the quenched free energy $\Lambda_\omega(0):=\limsup_{t \to \infty} \frac{1}{t}\ln E_0[ \exp\{-\int_0^t V_\omega (Z_s) ds \}]
$: For example in \cite[Corollary 1.4]{Ruess2013} under suitable assumptions we could relate the quenched free energy of Brownian motion with drift $\lambda \in \R^d$ in potential $V$ to the variational expression $\sigma_s$ with an additional `drift term'.

\begin{proof}
For \eqref{e:cV} note that
\begin{align*}
\int \frac{|\nabla f|^2}{8f} +c Vf d\P \leq c \int \frac{|\nabla f|^2}{8f} + Vf d\P.
\end{align*}

For \eqref{e:lyapunovExponent_forconstpot} recall the `inverse' Hölder inequality: If $g$, $h$ are measurable, $h \neq 0$ $\P$-a.s., then for $r \in (1,\infty)$,
\begin{align}\label{eq:holder_inverse}
\E[|g|^{1/r}]^r \E[|h|^{-1/(r-1)}]^{-(r-1)} \leq \E[|gh|].
\end{align}
This follows from an application of Hölder's inequality $\|f_1f_2\|_1 \leq \|f_1\|_p\|f_2\|_q$, $1\leq p,q \leq \infty$, $p^{-1}+q^{-1}=1$ to $f_1:=|gh|^{1/r}$, $f_2:=|h|^{-1/r}$, $p=r$, $q=r/(r-1)$. \eqref{eq:holder_inverse} applied to $r=2$, $g:=|\phi|^2$, $h:=f^{-1}$, and Jensen inequality give
\begin{align}\label{eq:B_lowerbound}
\inf_{\phi \in \Phi_y} \int \frac{|\phi|^2}{2f}d\P \geq |y|^2/2.
\end{align}
Estimate with \eqref{eq:B_lowerbound}
\begin{align*}
\Gamma_{c}^2(y)
& = 4 \inf_{f \in \F} \left( \int \frac{|\nabla f|^2}{8f} + cf d\P \right) \left( \inf_{\phi \in \Phi_y} \int \frac{|\phi|^2}{2f} d\P \right)
\geq 4c \inf_{f \in \F} \inf_{\phi \in \Phi_y} \int \frac{|\phi|^2}{2f} d\P
\geq 2c|y|^2.
\end{align*}
On the other hand, choosing $f \equiv 1$ and $\phi \equiv y$ in the variational expression for $\Gamma_c^2(y)$, one has $\Gamma_c^2(y) \leq 2c|y|^2$.

For the inequality preceding \eqref{e:concave} observe that:
\begin{align*}
\Gamma_{\sum_i \lambda_i V_i}^2(y)
& = 4\inf_{f \in \F} \inf_{\phi \in \Phi_y}  \left( \int \sum_i \lambda_i \left( \frac{|\nabla f|^2}{8f} + V_i f \right) d\P \right) \left( \int \frac{|\phi|^2}{2f} d\P \right)\\
& \geq \sum_i \lambda_i 4\inf_{f \in \F} \inf_{\phi \in \Phi_y} \left( \int \frac{|\nabla f|^2}{8f} +  V_i f d\P \right) \left( \int \frac{|\phi|^2}{2f} d\P \right)
= \sum_i \lambda_i \Gamma_{V_i}^2(y).
\end{align*}
Since the square root is concave and monotone the `non-squared' inequality \eqref{e:concave} is valid.

For \eqref{e:c+Vgeq} use \eqref{eq:B_lowerbound}, \eqref{e:lyapunovExponent_forconstpot} and
\begin{align*}
\Gamma_{c+V}^2(y)
& \geq 4 \inf_{f \in \F}  \left\{ \left( \int \frac{|\nabla f|^2}{8f} + Vf d\P \right) \inf_{\phi \in \Phi_y} \left( \int \frac{|\phi|^2}{2f} d\P \right) \right\} + 4c \inf_{f \in \F} \inf_{\phi \in \Phi_y} \int \frac{|\phi|^2}{2f} d\P\\
& \geq \Gamma_V(y) + 2c|y|^2.
\end{align*}

For \eqref{e:alpha_triangle} we argue with the representation of $\Gamma_V$ given by \cite[Propositions 3.13, 3.15]{Ruess2013}: For $f\in \F$, $\eta \in S^{d-1}$ let $K(f):= \E[|\nabla f|^2/(8f) + Vf]$ and $H(\eta,f):=\inf_{w \in \D}\E[|\nabla w -\eta|^2f]$. Then
\begin{align*}
\Gamma_V(x+y)
& = \sup_{\eta \in S^{d-1}} |\langle x+y,\eta \rangle| \inf_{f \in \F} 
\left[\frac{2K(f)}{H(\eta,f)}\right]^{1/2}\\
& \leq \sup_{\eta \in S^{d-1}} \left(|\langle x,\eta \rangle| + |\langle y,\eta \rangle| \right)\inf_{f \in \F} 
\left[\frac{2K(f)}{H(\eta,f)}\right]^{1/2}
\leq \Gamma_V(x) + \Gamma_V(y). \qedhere
\end{align*}

For \eqref{eq:Gamma_monotone} note that $\E[ |\nabla f|^2/(8f) + Vf] \E[|\phi|^2/(2f)]$ is monotone in $V$ for all $f \in \F$.

For \eqref{eq:comparison_with_qfe} use \eqref{eq:B_lowerbound} and \eqref{app:e:linearity}.
\end{proof}

\section{Inequalities}

\subsection{Effect of Randomness}

In \cite[Corollary 1.3]{Ruess2013} as a direct consequence of Theorem \ref{app:t:varform} we have seen that
\begin{align}\label{eq:GammaV_leq_GammaEV}
\Gamma_V \leq \Gamma_{\E V}.
\end{align}

The following theorem is a refinement of \eqref{eq:GammaV_leq_GammaEV}.

\begin{theorem}\label{th:LE_for_subsigmaalgebra}
Let $X:=(\Omega,\FF,\P,\tau)$ and $Y:=(\Omega,\GG,\P|_\GG,\tau)$ be metric dynamical systems with $\GG \subset \FF$. Let $V$ be a potential on $X$. Then with obvious notation,
\[
\Gamma_V^X \leq \Gamma_{\E[V|\GG]}^Y.
\]
\end{theorem}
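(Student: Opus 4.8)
The plan is to reduce everything to the variational formula $\Gamma_V(y) = 2\inf_{f\in\F}[(\int |\nabla f|^2/(8f)+Vf\,d\P)(\inf_{\phi\in\Phi_y}\int|\phi|^2/(2f)\,d\P)]^{1/2}$ and to exploit the fact that passing to the sub-$\sigma$-algebra $\GG$ only shrinks the competing classes. Concretely, every $\GG$-measurable object that is admissible in the variational problem for $Y$ is also $\FF$-measurable and hence admissible for $X$; so the $X$-infimum, being over a larger set, can only be smaller. Thus the first step is to show $\Gamma_V^X(y) \le \Gamma_V^Y(y)$ — but wait, $V$ need not be $\GG$-measurable, so $\Gamma_V^Y$ is not even defined. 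This is the crux: we must compare an $\FF$-problem involving $V$ with a $\GG$-problem involving $\E[V|\GG]$.

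The key observation is that for a $\GG$-measurable density $f$ (so $f_\omega$ is governed by the coarser system), the conditional Jensen / tower property gives
\begin{align*}
\int Vf\,d\P = \E[Vf] = \E\big[\E[V\mid\GG]\,f\big] = \int \E[V\mid\GG]\,f\,d\P,
\end{align*}
using $\GG$-measurability of $f$ to pull it inside the conditional expectation. The same identity holds for the gradient term and the $\phi$-term provided $f$ and $\phi$ are $\GG$-measurable, since then $\int|\nabla f|^2/(8f)\,d\P$ and $\inf_{\phi}\int|\phi|^2/(2f)\,d\P$ are literally the same integrals whether computed in $X$ or $Y$. Therefore, for each fixed $\GG$-measurable competitor pair $(f,\phi)$, the value of the bracketed quantity in $\Gamma^X_V$ equals the value of the corresponding quantity in $\Gamma^Y_{\E[V|\GG]}$. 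Taking the infimum over $\GG$-measurable $f\in\F$ and $\phi\in\Phi_y$ on the $X$-side, one gets $\Gamma_V^X(y)\le \Gamma^Y_{\E[V|\GG]}(y)$, because the $X$-infimum ranges over the (possibly strictly larger) class of all $\FF$-measurable admissible pairs. Finally $\Gamma_V^X(cy)=c\Gamma_V^X(y)$ by \eqref{app:e:linearity} reduces matters to $y\in S^{d-1}$ or any fixed $y$, so the pointwise inequality for all $y$ follows.

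There are two places requiring care. First, one must check that the function spaces behave correctly under restriction: a $\GG$-measurable $f$ satisfying the defining conditions of $\F$ (namely $\E f=1$, $f\ge c_f>0$, smooth realisations with bounded derivatives of all orders) is simultaneously a legitimate element of $\F$ for $X$ and for $Y$; likewise for $\Phi_y$, where the divergence-free condition $\nabla\cdot\phi=0$ and $\E\phi=y$ are insensitive to which $\sigma$-algebra we record. This is essentially bookkeeping: the realisations $f_\omega(x)=f(\tau_x\omega)$ and the differentiability/boundedness requirements are pointwise-in-$\omega$ statements, and $\tau_x$ preserves $\GG$, so nothing is lost. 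Second, one should make sure $\E[V\mid\GG]$ is itself a potential on $Y$ — it is nonnegative and $L^1$ by the conditional expectation, and $\tau$-covariance of the conditional expectation (which follows from $\tau$-invariance of $\P$ and $\tau$-invariance of $\GG$) guarantees it defines a genuine stationary potential on $Y$; if $V$ is a \emph{regular potential} one may additionally wish to remark (or assume) that $\E[V\mid\GG]$ inherits the regularity needed so that $\alpha^Y_{\E[V|\GG]}=\Gamma^Y_{\E[V|\GG]}$, but the inequality for $\Gamma$ itself needs no such hypothesis.

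The main obstacle I anticipate is not any single hard estimate but rather the careful verification that restricting the competitor class to $\GG$-measurable pairs on the $X$-side is legitimate and that this restricted infimum is what equals $\Gamma^Y_{\E[V|\GG]}$ — i.e. confirming that the $Y$-variational problem is \emph{exactly} the $\GG$-measurable sub-problem of the $X$-variational problem with $V$ replaced by its projection. Once that identification is in place, the inequality is immediate from ``infimum over a larger set is smaller,'' and the conditional-expectation manipulation is the only analytic input. One should also double-check that Theorem \ref{app:t:varform}'s hypotheses (ergodicity) are not needed here: the statement is purely about the variational expressions $\Gamma$, which are defined for any metric dynamical system, so ergodicity of $X$ or $Y$ plays no role in the proof of the inequality as stated.
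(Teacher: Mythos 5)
Your argument is correct and is essentially the paper's own proof: both rest on the inclusions $\F^Y\subset\F^X$, $\Phi_y^Y\subset\Phi_y^X$ of the $\GG$-measurable competitor classes into the $\FF$-measurable ones, together with the tower identity $\E[Vf]=\E[\E[V|\GG]f]$ for $\GG$-measurable $f$, so that the $X$-infimum, taken over a larger set, is bounded by the $Y$-infimum for $\E[V|\GG]$. Your additional remarks (admissibility bookkeeping, $\E[V|\GG]$ being a potential, no ergodicity needed) are accurate but only elaborate what the paper leaves implicit.
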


\begin{proof}
Let $\F_s^X$, $\Phi_y^X$ and $\F_s^Y$, $\Phi_y^Y$ denote the spaces $\F$, $\Phi_y$ for the dynamics of $X$ and $Y$ respectively. One has 
$
\F_s^Y \subset \F_s^X \text{ and } \Phi_y^Y\subset\Phi_y^X
$. 
Hence,
\begin{align*}
& \inf_{f \in \F_s^X} \E\left[ \frac{|\nabla f|^2}{8f} + Vf \right]\inf_{\phi \in \Phi_y^X} \E\left[\frac{|\phi|^2}{2f}\right]
\leq 
\inf_{f \in \F_s^Y} \E\left[ \frac{|\nabla f|^2}{8f}  + \E[V|\GG]f\right]\inf_{\phi \in \Phi_y^Y} \E\left[\frac{|\phi|^2}{2f}\right]
\end{align*}
which shows the statement.
\end{proof}

\subsection{Strict Inequality}

It is natural to ask whether the randomness of the potential has significant effect on the Lyapunov exponent. The following theorem gives a positive answer to this question:

\begin{theorem}\label{prop:strictinequality}
Assume $V$ is nondeterministic, weakly differentiable with $\|\nabla V \|_\infty < \infty$. Assume $0<\vmin \leq V \leq \vmax < \infty$. Then for $y \neq 0$,
\begin{align*}
\Gamma_V(y) < \Gamma_{\E V}(y).
\end{align*}
\end{theorem}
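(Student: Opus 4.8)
The plan is to exploit the variational formula for $\Gamma_V$ together with a strict convexity argument applied to the integrand $|\nabla f|^2/(8f) + Vf$, showing that the infimum defining $\Gamma_V$ cannot be attained at the same configuration that would be optimal for $\Gamma_{\E V}$, and that any near-optimal configuration is strictly penalized by the fluctuations of $V$. We already know from \eqref{eq:GammaV_leq_GammaEV} that $\Gamma_V(y) \leq \Gamma_{\E V}(y)$; the task is to rule out equality. First I would use the representation from \cite[Propositions 3.13, 3.15]{Ruess2013} employed in the proof of \eqref{e:alpha_triangle}: writing $K_V(f) := \E[|\nabla f|^2/(8f) + Vf]$ and $H(\eta,f) := \inf_{w \in \D} \E[|\nabla w - \eta|^2 f]$, one has
\begin{align*}
\Gamma_V(y) = \sup_{\eta \in S^{d-1}} |\langle y, \eta\rangle| \inf_{f \in \F} \left[ \frac{2 K_V(f)}{H(\eta, f)} \right]^{1/2},
\end{align*}
and the analogous formula for $\Gamma_{\E V}$ with $K_{\E V}(f) = \E[|\nabla f|^2/(8f)] + \E V \cdot \E f$. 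The key observation is that $K_{\E V}(f) = K_V(f)$ whenever $f$ is $\P$-a.s.\ constant (so in particular $f \equiv 1$), but for the constant potential $\E V$ the infimum over $f \in \F$ in the variational formula is in fact attained in the limit by constants (this underlies \eqref{e:lyapunovExponent_forconstpot}, where $f \equiv 1$ is optimal).

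The heart of the argument, then, is to show that for $V$ nondeterministic, no minimizing sequence for $\Gamma_V(y)$ can concentrate near the constant density $f \equiv 1$ without paying a strictly positive price. Concretely, I would fix the maximizing direction $\eta$ for $\Gamma_{\E V}(y)$ and argue: for any $f \in \F$, since $H(\eta, f) \leq \E f = 1$ (take $w = 0$) — actually more carefully, $H(\eta,f) \le \|f\|_\infty$, and one needs a matching lower bound; here I would invoke the bound $H(\eta,f) \le 1$ that holds when $f$ is close to constant, combined with the fact that $K_V(f) = \E[|\nabla f|^2/(8f)] + \E[Vf]$ and, by the nondeterminism of $V$ and $\E f = 1$, we have $\E[Vf] = \E V + \Cov(V, f)$. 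If $f$ is bounded away from $1$ on a set of positive measure, the Dirichlet term $\E[|\nabla f|^2/(8f)]$ is bounded below by a positive constant depending on $\|\nabla V\|_\infty$, $\vmin$, $\vmax$ via a Poincaré-type inequality on the (ergodic) dynamical system; if instead $f$ is uniformly close to $1$, then $H(\eta,f)$ is close to $H(\eta,1) = 1$ while $\E[Vf]$ is close to $\E V$, so $2K_V(f)/H(\eta,f)$ is close to $2\E V$, and one must extract a strict gain. This strict gain comes from the following dichotomy made quantitative: either $f$ has enough oscillation to make the Dirichlet term a definite positive quantity, or $f$ is so flat that $\E[Vf] = \E V + \Cov(V,f) \geq \E V - \|V - \E V\|_2 \|f - 1\|_2$ is within $\varepsilon$ of $\E V$ but then we have lost nothing and gained nothing — so the real mechanism must be that one can do strictly better than $f \equiv 1$ by choosing $f$ negatively correlated with $V$, i.e.\ $\Cov(V,f) < 0$, at the cost of a Dirichlet term that is only second order in the perturbation size while the covariance gain is first order. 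That is, I would take $f = 1 - \epsilon (V - \E V)/C$ for small $\epsilon$ (truncated/smoothed to lie in $\F$), compute $K_V(f) = \E V - \epsilon \Var(V)/C + O(\epsilon^2)$ against $H(\eta,f) = 1 + O(\epsilon)$, and conclude that for $\epsilon$ small enough $2K_V(f)/H(\eta,f) < 2\E V$, hence $\Gamma_V(y)^2 < |\langle y,\eta\rangle|^2 \cdot 2 \E V \le \Gamma_{\E V}(y)^2$ after reinstating the supremum and using that $\Gamma_{\E V}(y) = \sqrt{2\E V}\,|y|$ by \eqref{e:lyapunovExponent_forconstpot}.

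The main obstacle I anticipate is the bookkeeping around $H(\eta,f)$: one must verify that perturbing $f$ away from the constant raises $H(\eta,f)$ by at most $O(\epsilon)$ — or, better, that the combined ratio still decreases — and that the smoothed, truncated perturbation $f_\epsilon$ genuinely lies in $\F$ (differentiability of all orders, uniform lower bound $c_{f_\epsilon} > 0$, bounded derivatives), which is where the hypotheses $V$ weakly differentiable, $\|\nabla V\|_\infty < \infty$, and $0 < \vmin \le V \le \vmax < \infty$ enter: they let us mollify $V$ on $\Omega$ to get a smooth bounded approximation $V_\delta$ with $\Var(V_\delta) > 0$ still (for $\delta$ small, since $V$ is nondeterministic), and then use $f_\epsilon = 1 - \epsilon(V_\delta - \E V_\delta)/C$ with $C := 2\|V_\delta - \E V_\delta\|_\infty$ so that $f_\epsilon \ge 1/2$ and all derivative bounds are controlled. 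A secondary subtlety is that the variational formula's infimum over $f$ is what we need an upper bound on, so exhibiting one good competitor $f_\epsilon$ suffices — we do not need to analyze all minimizing sequences — which simplifies the argument considerably: the strict inequality follows as soon as a single admissible $f_\epsilon$ beats the constant competitor, for the $\eta$ achieving the supremum.
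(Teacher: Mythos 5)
Your overall mechanism is the same as the paper's: exhibit a single admissible density close to the constant, perturbed in the direction negatively correlated with $V$, so that the potential term drops at first order while the Dirichlet term only costs second order. The paper does this with $f_p=\beta V^{-p}$, $\phi\equiv y$, and FKG ($\Cov(V,\ln V)>0$); your linear competitor $f_\epsilon=1-\epsilon(V_\delta-\E V_\delta)/C$ with $\Var(V)>0$ is the same idea in slightly more elementary clothing (it even avoids FKG and, thanks to mollification, the differentiability hypothesis on $V$). So the strategy is right, and your remark that one good competitor suffices is exactly the point.

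However, there is a genuine gap at the decisive step, and it sits precisely where you yourself flag the ``bookkeeping'': the third factor. Your claim $H(\eta,f_\epsilon)=1+O(\epsilon)$ does not allow the conclusion $2K_V(f_\epsilon)/H(\eta,f_\epsilon)<2\E V$, because an error of size $-c\epsilon$ in the denominator produces a loss of order $\E V\,c\,\epsilon$ that can cancel the order-$\epsilon$ gain $\epsilon\,\Cov(V,V_\delta)/C$. The only bound implicit in your sketch, $H(\eta,f_\epsilon)\geq\inf_\Omega f_\epsilon\geq 1-\epsilon/2$ with your normalization $C=2\|V_\delta-\E V_\delta\|_\infty$, gives exactly such a first-order loss of size about $\E V\,\epsilon/2\geq\vmin\epsilon/2$, which dominates the gain whenever the fluctuations of $V$ are small compared with $\E V$; so as written the inequality does not follow. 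In addition, bounding $\Gamma_V(y)=\sup_{\eta}|\langle y,\eta\rangle|\inf_f[2K_V(f)/H(\eta,f)]^{1/2}$ requires a bound uniform in $\eta$, not only at the $\eta$ maximizing $|\langle y,\eta\rangle|$. Both defects are repairable: either show $H(\eta,f_\epsilon)\geq 1-O(\epsilon^2)$ uniformly in $\eta$ (expand $|\nabla w-\eta|^2$, use $\E[\nabla w]=0$ so the cross term is $-2\epsilon\E[(\nabla w\cdot\eta)g]$ with $f_\epsilon=1+\epsilon g$, apply Cauchy--Schwarz and optimize in $\|\nabla w\|_2$), or, simpler and closer to the paper, stay with the original variational expression and take $\phi\equiv y$: then the third factor is $|y|^2\E[1/(2f_\epsilon)]$, and since $\E[f_\epsilon-1]=0$ and $f_\epsilon\geq 1/2$ one has $\E[1/f_\epsilon]=1+O(\epsilon^2)$, so the loss is genuinely second order and the strict inequality follows against $\Gamma_{\E V}^2(y)=2\E[V]\,|y|^2$ from \eqref{e:lyapunovExponent_forconstpot}. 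With that repair (and the cosmetic fix $\Cov(V,V_\delta)$ in place of $\Var(V)$, positive for $\delta$ small because $V$ is nondeterministic), your argument is complete and essentially parallels the paper's.
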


\begin{proof}
Without restriction we consider the set of functions $\F_w$ instead of $\F$ in the definition of $\Gamma_V$. This is possible by \cite[Proposition 2.2]{Ruess2013}. Let $0<p<\infty$ and choose $f_p:= \beta V^{-p}$ with $\beta := \E[V^{-p}]^{-1}$. Then $f_p\in \F_w$. One has
\begin{align*}
\nabla f_p = - \beta p V^{-p-1} \nabla V,
\end{align*}
use the chain rule for weak derivatives, see \cite[Lemma 7.5]{Gilbarg83}.
Choosing $\phi \equiv y$, we get
\begin{align}\label{eq:estimate0}
\Gamma_V^2(y)
& \leq 2 |y|^2 \E \left[ \frac{|\nabla f_p|^2}{8f_p} + Vf_p \right] \E\left[\frac{1}{f_p}\right] \nonumber\\
& = 2 |y|^2 \E\left[\beta p^2 \frac{|\nabla V |^2 }{8 V^{p+2}} +\beta V^{1-p} \right] \E\left[\frac{V^p}{\beta}\right]
 = 2 |y|^2 \E\left[p^2 \frac{|\nabla V |^2 }{8 V^{p+2}} + V^{1-p} \right]\E[V^p].
\end{align}

We start considering $\psi(p):=\E[V^{1-p}]\E[V^p]$. $\psi$ is differentiable on $\R$ with derivative 
\begin{align*}
\psi'(p) = - \E[V^{1-p} \ln V] \E[V^p] + \E[V^{1-p}]\E[V^p \ln V],
\end{align*}
where we used the theorem on differentiation under the integral sign, see e.g.\ \cite[Lemma 16.2]{Bauer2001}. At $p=0$ one has
\begin{align*}
\psi'(0)= -\E[V \ln V] + \E[V]\E[\ln V ] = - \Cov(V,\ln V).
\end{align*}
One has $\Cov(V,\ln V)>0$ by FKG-inequality, see e.g.\ \cite[Theorem 1.2]{Rinott1992}.
Therefore, choosing a constant $\Cov(V,\ln V) > C_1 > 0$ one has for $p>0$ small enough,
\begin{align}\label{eq:estimate1}
\psi(p)=\E[V^{1-p}]\E[V^p] < \E[V]-C_1p.
\end{align}

On the other hand there is a constant $C_2>0$ such that for $p>0$,
\begin{align}\label{eq:estimate2}
\E\left[p^2 \frac{|\nabla V|^2}{8 V^{p+2}}\right]\E[V^p] \leq 8^{-1} p^2 \vmin^{-2} (\vmax/\vmin)^p \|\nabla V\|_2^2 \leq C_2 p^2.
\end{align}

Estimates \eqref{eq:estimate0}, \eqref{eq:estimate1} and \eqref{eq:estimate2} give for $p>0$ small
\begin{align*}
\Gamma_V^2(y) \leq 2|y|^2(\E[V]-C_1 p + C_2 p^2),
\end{align*}
which for $p>0$ small enough is strictly lower than $\Gamma_{\E V}^2(y)=2\E[V]|y|^2$, see \eqref{e:lyapunovExponent_forconstpot}.
\end{proof}

\subsection{Perturbation and Extension} \label{subsec:independent_input}

The potential $V$ may be perturbed by an external input or extended into `new' dimensions. Extensions are of interest if one considers for example random chessboard potentials, see e.g.\ \cite{DalMaso1986}. In the following we elaborate a framework for external input and extensions and give estimates on Lyapunov exponents for perturbed or extended potentials.

Let $X:= (\Omega_1, \FF_1, \P_1, \tau^{(1)})$ and $Y:=(\Omega_2, \FF_2, \P_2, \tau^{(2)})$ be metric dynamical systems of dimensions $d_1$ and $d_2$ respectively. Consider some measure $\P$ on $(\Omega, \FF):= (\Omega_1 \times \Omega_2, \FF_1 \otimes \FF_2)$ with marginal distributions $\P_1$ and $\P_2$. We introduce two possible actions on $\Omega$:\\
\noindent \textit{\underline{E}xtension of $X$:} Define for $x=(x_1,x_2) \in \R^{d_1} \times \R^{d_2}$ and $\omega \in \Omega$ the action
\begin{align*}
\tau_x^e\omega:= (\tau_{x_1}^{(1)} \omega_1, \tau_{x_2}^{(2)} \omega_2).
\end{align*}
\noindent \textit{\underline{P}erturbation of $X$:} Assume $d_1 = d_2$, define for $x \in \R^{d_1}$ and $\omega \in \Omega$
\begin{align*}
\tau_x^p\omega:= (\tau_x^{(1)} \omega_1, \tau_x^{(2)} \omega_2).
\end{align*}
Note that $\tau^e$ as well as $\tau^p$ are indeed product measurable actions on the respective spaces.

If $\P$ is invariant under $\tau^p$ or $\tau^e$ then $\P$ is called a joining of $\P_1$ and $\P_2$. We denote the set of joinings with respect to $\tau^p$ by $J_p(X,Y)$ and the set of joinings with respect to $\tau^e$ by $J_e(X,Y)$. The product measure $\P_1 \otimes \P_2$ is always a joining with respect to $\tau^p$ and $\tau^e$. 

Note that joinings of ergodic dynamical systems are not necessarily ergodic any more. For example consider the torus $X^{\tor{d}}$, see page \pageref{app:subsec:examples}. On $(\T^d\times\T^d,\BB(\T^d)\otimes\BB(\T^d),\leb\otimes\leb)$ the shift $\tau^p$ is not ergodic.

Joinings are discussed in literature in great extent. Existence and ergodicity of joinings in general, and the question when the product measure leads to an ergodic joining are addressed e.g.\ in \cite[Chapter 5,6]{Furstenberg1981}, \cite[Chapter 10]{Cornfeld1982}, \cite[Chapter 6]{Rudolph1990}, \cite{Ryzhikov1991}. However, we want to mention that often in literature actions of only one transformation or actions of $\Z$ on $\Omega$ are considered, instead of studying the action of more general groups.

We recall a result in this direction: The definition of weak mixing for one shift can be found in \cite[Definition 4.1]{Rudolph1990}.

\begin{lemma}[see e.g.\ {\cite[Proposition 4.19]{Rudolph1990}}]\label{lem:weakmixing-charakterisation}
Let $\phi$ be a measurable transformation of a probability space $(\tilde \Omega, \tilde \FF,\tilde \P)$.
Then $\phi$ is weakly mixing under $\tilde \P$, if and only if the product $\phi \times \psi$ of $\phi$ with any other ergodic transformation $\psi$ of some probability space $(\hat \Omega, \hat \FF, \hat \P)$ is an ergodic transformation on $(\tilde \Omega \times \hat \Omega, \tilde \FF \otimes \hat \FF, \tilde \P \otimes \hat \P)$.
\end{lemma}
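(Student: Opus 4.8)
The plan is to prove both implications; as this is a classical characterisation I only outline the argument. Write $\mu:=\tilde\P$ and $\nu:=\hat\P$. I will use two standard facts: $\phi$ is weakly mixing under $\mu$ precisely when
\begin{align*}
\frac1N\sum_{n=0}^{N-1}\bigl|\mu(\phi^{-n}A\cap B)-\mu(A)\mu(B)\bigr|\longrightarrow 0 \qquad\text{for all }A,B\in\tilde\FF,
\end{align*}
and a measure-preserving transformation $\psi$ is ergodic under $\nu$ precisely when $\frac1N\sum_{n=0}^{N-1}\nu(\psi^{-n}C\cap D)\to\nu(C)\nu(D)$ for all $C,D\in\hat\FF$; the latter follows from the mean ergodic theorem, and both conditions, once verified on a generating algebra of sets, extend to all measurable sets by $L^1$-approximation.

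For the forward implication, suppose $\phi$ is weakly mixing and let $\psi$ be an arbitrary ergodic transformation. I would verify the Cesàro-mixing condition for $\phi\times\psi$ on the algebra generated by rectangles. On a rectangle the correlation factors,
\begin{align*}
(\mu\otimes\nu)\bigl((\phi\times\psi)^{-n}(A\times C)\cap(B\times D)\bigr)=\mu(\phi^{-n}A\cap B)\,\nu(\psi^{-n}C\cap D),
\end{align*}
so writing $a_n:=\mu(\phi^{-n}A\cap B)-\mu(A)\mu(B)$ and $b_n:=\nu(\psi^{-n}C\cap D)\in[0,1]$, weak mixing of $\phi$ gives $\frac1N\sum_{n<N}|a_n|\to0$ and ergodicity of $\psi$ gives $\frac1N\sum_{n<N}b_n\to\nu(C)\nu(D)$, whence
\begin{align*}
\frac1N\sum_{n=0}^{N-1}\bigl(\mu(A)\mu(B)+a_n\bigr)b_n
&=\mu(A)\mu(B)\cdot\frac1N\sum_{n=0}^{N-1}b_n+\frac1N\sum_{n=0}^{N-1}a_nb_n\\
&\longrightarrow \mu(A)\mu(B)\,\nu(C)\,\nu(D),
\end{align*}
using $\bigl|\frac1N\sum_{n<N}a_nb_n\bigr|\le\frac1N\sum_{n<N}|a_n|\to0$. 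Since the limit equals $(\mu\otimes\nu)(A\times C)\cdot(\mu\otimes\nu)(B\times D)$, the product system $\phi\times\psi$ is ergodic.

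For the converse I argue by contraposition. Taking $\psi$ to be the identity on a one-point probability space (trivially ergodic), the hypothesis already forces $\phi$ to be ergodic. Assume now $\phi$ is \emph{not} weakly mixing; then the Koopman operator $U_\phi f=f\circ\phi$ has a non-constant eigenfunction $g\in L^2(\mu)$ with eigenvalue $\lambda$, $|\lambda|=1$, $\lambda\neq1$, and since $|g|$ is $U_\phi$-invariant and $\phi$ ergodic we may normalise $|g|\equiv1$. I then manufacture an ergodic system carrying $\bar\lambda$ as an eigenvalue: if $\lambda=e^{2\pi\imaginary\theta}$ with $\theta$ irrational, take $\psi$ to be the ergodic rotation by $\theta$ on $(\T,\BB(\T),\leb)$ and $h(t):=e^{-2\pi\imaginary t}$; if $\lambda$ is a primitive $q$-th root of unity, take $\psi$ to be the ergodic cyclic shift $j\mapsto j+1\ (\modulo q)$ on $\{0,\dots,q-1\}$ with the uniform measure and $h(j):=\bar\lambda^{\,j}$. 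In both cases $|h|\equiv1$ and $U_\psi h=\bar\lambda h$. Then $F:=g\otimes h\in L^2(\mu\otimes\nu)$ is non-constant and satisfies $(U_\phi\otimes U_\psi)F=\lambda\bar\lambda\,F=F$, so $\phi\times\psi$ is not ergodic, contradicting the hypothesis; this proves the contrapositive.

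The argument presents no serious obstacle. The only points needing care are, in the forward direction, the passage from Cesàro-mixing on the rectangle algebra to ergodicity on all of $\tilde\FF\otimes\hat\FF$ (a routine density argument), and, in the converse, handling the root-of-unity and irrational cases uniformly when producing an auxiliary ergodic system with the prescribed eigenvalue $\bar\lambda$.
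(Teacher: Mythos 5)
The paper does not prove this lemma at all: it is quoted verbatim from the literature (Rudolph, Proposition 4.19), so there is no internal proof to compare against. Your argument is the standard textbook proof and is correct: the forward direction via the Ces\`aro characterisations of weak mixing and ergodicity, checked on product rectangles and extended by the usual approximation argument, and the converse by contraposition, first extracting ergodicity of $\phi$ from the trivial one-point system and then pairing a non-constant unimodular eigenfunction of $U_\phi$ with eigenvalue $\lambda\neq 1$ against an auxiliary ergodic system (irrational rotation or cyclic shift) carrying the conjugate eigenvalue, so that $g\otimes h$ is a non-constant invariant function for the product. The only point worth flagging is that the converse leans on the Koopman--von Neumann spectral characterisation of weak mixing (an ergodic, non--weakly-mixing transformation admits a non-constant eigenfunction with $|\lambda|=1$, $\lambda\neq1$), which is itself a classical theorem of comparable weight to the lemma; since the lemma is anyway cited as ``see e.g.''\ a standard reference, invoking it is acceptable, but a self-contained proof would have to include it.
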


Here the product of $\phi$ and $\psi$ is defined by $\phi\times\psi: \tilde\Omega\times\hat\Omega \to\tilde\Omega\times\hat\Omega$, $(\omega_1,\omega_2) \mapsto (\phi(\omega_1),\psi(\omega_2))$. On page \pageref{app:subsec:examples} we have introduced the ergodic dynamical system $X^{\poi}$, where $\P$ is a Poisson point process. We can construct the following example:

\begin{example*}
The perturbation or extension of $X^{\poi}$ with any other ergodic dynamical system is again an ergodic dynamical system under the product measure.

In fact, with \cite[12.3.II]{Daley2008} considering bounded Borel measurable subsets of $\R^d$ we know that the Poisson point process satisfies \cite[12.3.I(iii)]{Daley2008}. In particular, according to \cite[Definition 4.1]{Rudolph1990} any transformation $\tau_x$, $x \neq 0$, is weakly mixing under $\P$. Therefore, with help of Lemma \ref{lem:weakmixing-charakterisation} the statement follows.
\end{example*}

We need some additional notation: With $\E_1$ we denote the expectation operator with respect to $\P_1$. We write $\Gamma^{\P_1}$, $\Gamma^{\P,e}$ and $\Gamma^{\P,p}$ for the variational functional in order to indicate the underlying dynamical system. $\pi_1: \Omega_1\times\Omega_2\to\Omega_1$ denotes the projection onto $\Omega_1$. For $y \in \R^{d_1}$ we set
\[
\hat y := (y_1, \ldots ,y_{d_1}, 0,\ldots 0) \in \R^{d_1+d_2}.
\]
The following result studies the effect of external input:

\begin{theorem}\label{lem:independent_input}
Let $V$ be a potential on $\Omega$. For any joining $\P \in J_e(X,Y)$, for any $y \in \R^{d_1}$,
\begin{align*}
\Gamma_{V}^{\P,e}(\hat y) \leq \Gamma_{\E[ V|\pi_1=\cdot]}^{\P_1}(y).
\end{align*}
If $d_1=d_2$, the analogous inequality is valid for any joining $\P \in J_p(X,Y)$.
\end{theorem}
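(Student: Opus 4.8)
The plan is to exhibit, from an arbitrary pair of competitors for $\Gamma_{\E[V|\pi_1=\cdot]}^{\P_1}(y)$ on the smaller system $X$, a competing pair for $\Gamma_V^{\P,e}(\hat y)$ on the joined system $(\Omega,\FF,\P,\tau^e)$ which does not increase the value of the variational functional. The natural candidate is to take functions and vector fields that only depend on the first coordinate $\omega_1$: given $f \in \F^{X}_s$ and $\phi \in \Phi^{X}_y$ on $\Omega_1$, set $\tilde f(\omega_1,\omega_2) := f(\omega_1)$ and $\tilde\phi(\omega_1,\omega_2) := (\phi(\omega_1),0,\dots,0) \in \R^{d_1+d_2}$. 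The key point is that such $\tilde f$, $\tilde\phi$ are legitimate elements of $\F_s$ and $\Phi_{\hat y}$ for the extension dynamics $\tau^e$: smoothness and boundedness of all derivatives are inherited, $\E\tilde f = \E_1 f = 1$ and $\tilde f \geq c_f$, $\E\tilde\phi = \hat y$, and — crucially — $\nabla\cdot\tilde\phi = 0$ since $\partial_{x_j}\tilde\phi_i = 0$ for $j > d_1$ (the extra coordinates do nothing) while in the first $d_1$ coordinates it reduces to $\nabla\cdot\phi = 0$ on $\Omega_1$.

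**The computation.**
With this substitution the gradient $\nabla\tilde f$ only has nonzero components in the first $d_1$ directions, and these equal the realisations of $\nabla f$ transported to $\Omega$; similarly $|\tilde\phi|^2 = |\phi|^2$. Hence $\E\bigl[|\nabla\tilde f|^2/(8\tilde f) + V\tilde f\bigr] = \E\bigl[|\nabla f(\omega_1)|^2/(8f(\omega_1)) + V(\omega_1,\omega_2)f(\omega_1)\bigr]$, and since $f$ depends only on $\omega_1$ we may condition on $\pi_1$ inside the integral to replace $V$ by $\E[V|\pi_1 = \cdot]$, obtaining exactly $\E_1\bigl[|\nabla f|^2/(8f) + \E[V|\pi_1{=}\cdot]\,f\bigr]$. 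Likewise $\E\bigl[|\tilde\phi|^2/(2\tilde f)\bigr] = \E_1\bigl[|\phi|^2/(2f)\bigr]$. Taking the infimum over $f \in \F^X_s$ and $\phi \in \Phi^X_y$ on the right and noting that the left infimum over $\F_s$, $\Phi_{\hat y}$ for $\tau^e$ only decreases the value, one reads off $\Gamma_V^{\P,e}(\hat y) \leq \Gamma_{\E[V|\pi_1{=}\cdot]}^{\P_1}(y)$. For the perturbation case $d_1 = d_2$ and $\tau^p$, the same functions $\tilde f$, $\tilde\phi$ work verbatim — here $\tilde\phi$ need not be padded with zeros, it is literally $\phi(\omega_1)$ viewed as a $\R^{d_1}$-valued field on $\Omega$, and divergence-freeness, the expectation constraints, and the two integral identities go through identically — so the argument is essentially a rerun of Theorem~\ref{th:LE_for_subsigmaalgebra}'s ``smaller dynamical system'' principle.

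**Where the work actually sits.**
The main obstacle is not the algebra but the bookkeeping of function spaces: one must check carefully that lifting an admissible $f$ (resp.\ $\phi$) from $X$ to the joined system really lands in $\F_s$ (resp.\ $\Phi_{\hat y}$) \emph{for the $\tau^e$-dynamics}, paying attention to the fact that differentiability on $\Omega$ is with respect to the $\R^{d_1+d_2}$-action while $f_\omega$ is constant in the last $d_2$ variables (so all its higher derivatives in those directions vanish, which is fine). A second subtlety is the conditioning step $\E[V f(\pi_1)] = \E_1[\E[V|\pi_1{=}\cdot]\,f]$: this is just the tower property together with $f$ being $\sigma(\pi_1)$-measurable, but it uses that $\P$ has first marginal $\P_1$, which is exactly the joining hypothesis. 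No approximation arguments are needed because $\F_s$ (not $\F_w$) is the domain of the infimum and membership is stable under this lift; one could equally well run the argument through the $\F_w$/$\Phi^w_y$ representation of \cite{Ruess2013} if one preferred, but the $\F_s$ version is cleaner here.
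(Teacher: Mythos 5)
Your proposal is correct and follows essentially the same route as the paper: the paper restricts the infimum on the joined system to the subclass of competitors depending only on the first coordinate and identifies these bijectively with $\F_1$, $\Phi_y^1$, which is exactly the lift $f\mapsto f\circ\pi_1$, $\phi\mapsto(\phi\circ\pi_1,0,\dots,0)$ that you construct, followed by the same tower-property/marginal step to replace $V$ by $\E[V|\pi_1=\cdot]$. The verification points you highlight (vanishing derivatives in the extra directions, preservation of the divergence-free and expectation constraints, and the verbatim rerun for $\tau^p$) match the paper's argument.
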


Note that in fact, $\Gamma^{\P_1}$ does not depend on the realisation of $\E[ V|\pi_1=\cdot]$.

\begin{proof}
We prove the statement for $\tau^e$. The same argument works for $\tau^p$. Introduce $\F_1$ as the set $\F$ for the dynamical system $X$ as defined on page \pageref{def:F_s}. By $\F_e$ we denote the set $\F$ on $\Omega$. Introduce $\Phi_y^1$ as the set $\Phi_y$ for $X$ and $\Phi_{\hat y}^e$ as the set $\Phi_{\hat y}$ on $\Omega$, see page \pageref{def:P^s}. We define
\begin{align*}
\F^X&:=\{f \in \F_e: \; \forall\; \omega \in \Omega_1 \; \exists\; c_\omega >0 \;\text{s.t.}\; f(\omega,\cdot) \equiv c_\omega \},\\
\Phi_{\hat y}^X &:=\{\phi \in \Phi_{\hat y}^e: \; \forall \;\omega \in \Omega_1 \; \exists \;y_\omega \in \R^{d_1} \;\text{s.t.}\; (\phi_i(\omega,\cdot))_{i=1,\ldots d_1} \equiv y_\omega, \;\forall\; d_1<i \leq d_2 : \; \phi_i \equiv 0\}.
\end{align*}
Considering only the first component any $f \in \F^X$ can be identified uniquely with $\tilde f \in \F_1$ such that $f=\tilde f \circ \pi_1$. Then $|\nabla^{\tau_e}f|^2=|\nabla^{\tau^{(1)}}\tilde f|^2 \circ \pi_1$ with obvious notation. Analogously any $\phi \in \Phi_{\hat y}^X$ can be identified uniquely with $\tilde \phi \in \Phi_y^1$ after a projection of $\phi $ onto its first $d_1$ components such that $(\phi_i)_{i=1,\ldots d_1} = (\tilde \phi \circ \pi_1)_{i=1,\ldots d_1}$. Then $|\phi|^2=|\tilde \phi \circ \pi_1|^2$ and we get
\begin{align*}
(\Gamma_{V}^{\P,e})^2(\hat y)
& \leq 4 \inf_{f \in \F^X} \inf_{\phi \in \Phi_{\hat y}^X}
\E\left[ \frac{|\nabla f|^2}{8f} + Vf \right]
\E\left[ \frac{|\phi|^2}{2f}  \right]\\
& = 4 \inf_{\tilde f \in \F_1} \inf_{\tilde \phi \in \Phi_y^1} 
\E_1\left[\frac{|\nabla \tilde f|^2}{8\tilde f} + \E[V|\pi_1=\cdot] \tilde f\right]
\E_1\left[ \frac{|\tilde \phi|^2}{2\tilde f} \right]
= (\Gamma_{\E[V|\pi_1=\cdot]}^{\P_1})^2(y).
\end{align*}
This shows the statement.
\end{proof}

We use this result to study sums and products of independent potentials.

\begin{corollary}\label{cor:applications_of_independent_input}
Let $\P=\P_1\otimes\P_2$. Assume $V_1, V_2 \in L^1(\P)$ with $V_1$ constant in the second component and $V_2$ constant in the first component. Then for $y \in \R^{d_1}$, 
\begin{align*}
\Gamma_{V_1+V_2}^{\P,e}(\hat y) \leq \Gamma_{V_1+ \E V_2}^{\P_1} (y), \ \ \Gamma_{V_1V_2}^{\P,e}(\hat y) \leq \Gamma_{V_1\E V_2}^{\P_1}(y),
\end{align*}
where for the first inequality $V_1+V_2$ and for the second $V_1V_2$ is required to be a potential. Analogous results hold for the action $\tau^p$.
\end{corollary}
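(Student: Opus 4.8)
The plan is to derive Corollary~\ref{cor:applications_of_independent_input} directly from Theorem~\ref{lem:independent_input} by computing the conditional expectation explicitly in the two cases and then invoking monotonicity and the elementary properties of $\Gamma$. Since $\P=\P_1\otimes\P_2$, the two components $\omega_1$ and $\omega_2$ are independent, so for any integrable $W$ on $\Omega$ the conditional expectation $\E[W\mid\pi_1=\cdot]$ is obtained by integrating out the second coordinate against $\P_2$. Concretely, writing $V_1(\omega)=V_1(\omega_1)$ and $V_2(\omega)=V_2(\omega_2)$ (abusing notation for functions constant in one component), one gets $\E[V_1+V_2\mid\pi_1=\cdot]=V_1+\E_2 V_2$ and $\E[V_1V_2\mid\pi_1=\cdot]=V_1\,\E_2 V_2$, using that $V_1$ is $\pi_1$-measurable and pulls out of the conditional expectation. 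Here $\E_2 V_2$ is the constant $\E V_2$ on $\Omega_1$.

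Once these identities are in hand, the corollary is immediate: Theorem~\ref{lem:independent_input} applied to the potential $V_1+V_2$ (respectively $V_1V_2$) gives
\[
\Gamma_{V_1+V_2}^{\P,e}(\hat y)\leq \Gamma_{\E[V_1+V_2\mid\pi_1=\cdot]}^{\P_1}(y)=\Gamma_{V_1+\E V_2}^{\P_1}(y),
\]
and likewise for the product. The only points needing a word of care are: (i) the hypothesis of Theorem~\ref{lem:independent_input} requires the relevant function to be a potential, i.e.\ nonnegative and in $L^1$, which is exactly why we assume $V_1+V_2$ (resp.\ $V_1V_2$) is a potential in the statement; (ii) $V_1\E V_2$ and $V_1+\E V_2$ are then potentials on $\Omega_1$ as well, since $\E V_2\geq 0$ and $V_1\in L^1(\P_1)$, so the right-hand sides make sense; and (iii) one should note, as the remark after Theorem~\ref{lem:independent_input} already points out, that $\Gamma^{\P_1}$ does not depend on the particular realisation chosen for the conditional expectation, so replacing $\E[V_2\mid\pi_1=\cdot]$ by the constant $\E V_2$ is harmless.

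For the statement about the perturbation action $\tau^p$, I would simply remark that Theorem~\ref{lem:independent_input} has an identical conclusion for joinings in $J_p(X,Y)$ (with $d_1=d_2$ and $\hat y=y$), and that the product measure $\P_1\otimes\P_2$ lies in $J_p(X,Y)$ as observed in the text; the same computation of the conditional expectation then yields the analogous inequalities with $\Gamma^{\P,e}$ replaced by $\Gamma^{\P,p}$ and $\hat y$ replaced by $y$.

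I do not expect any real obstacle here: the corollary is an application, and the entire content is the observation that under a product measure the conditional expectation given the first coordinate is the partial integral over the second coordinate, combined with the $\pi_1$-measurability of $V_1$. If anything, the one thing worth double-checking is the integrability and nonnegativity bookkeeping in (i)--(ii) above so that every object to which $\Gamma$ is applied is genuinely a potential in the sense fixed in the introduction.
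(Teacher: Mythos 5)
Your proposal is correct and follows exactly the route the paper intends: the corollary is stated without proof as an immediate consequence of Theorem~\ref{lem:independent_input}, using that under $\P_1\otimes\P_2$ (which lies in both $J_e(X,Y)$ and $J_p(X,Y)$) one has $\E[V_1+V_2\mid\pi_1=\cdot]=V_1+\E V_2$ and $\E[V_1V_2\mid\pi_1=\cdot]=V_1\,\E V_2$. Your bookkeeping on integrability, nonnegativity and the irrelevance of the choice of version of the conditional expectation is exactly the right level of care, so nothing is missing.
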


\section{Continuity} \label{sec:continuity}

In this section we study continuity properties of the Lyapunov exponent. We consider continuity with respect to the underlying probability measure, continuity with respect to the potential and we are also interested in the exact rate of convergence of the Lyapunov exponent for scaled potentials. In Section \ref{sec:counterex} we give examples which show that the continuity results we obtain here are essentially all one can expect in general. Additional assumptions however should allow to derive stronger results. Possible enforcements of the prerequisites are for example  mixing properties of the underlying probability measure, finite range dependence properties of the potential, or compactness of the space $\Omega$. We show in Subsection \ref{subsec:semicontinuity_compact} that compactness allows to deduce exact results. Both, compactness assumptions as well as additional mixing or independence properties are studied in literature in comparable situations:

For example for the time constant in i.i.d.\ first-passage percolation continuity has been investigated in \cite[Theorem 3]{Cox1981}, see also \cite[Chapter X.4]{Smythe1978}. Recently, continuity of the Lyapunov exponent of random walk in i.i.d.\ random potential with respect to convergence in distribution of the underlying potential has been shown in \cite{Le2013}. Models with long range dependencies are considered e.g.\ in \cite{Scholler2011}. We also want to refer to \cite[Section 11]{Mourrat11} where similar questions are addressed. In \cite[Lemma 3.1]{Rassoul2012} continuity of the quenched free energy of random walk in i.i.d.\ potential with respect to $L^p$ convergence, $p>d$, of the potential is established. Continuity of quantities similar to the Lyapunov exponent is studied e.g.\ in \cite{Bourgain2002,Bourgain2005,Jitomirskaya2011,Duarte2013,You2013}. There, compactness is a central feature in order to obtain continuity properties.

It is immediate to show continuity of the Lyapunov exponent with respect to uniform convergence of the potential:

\begin{proposition}\label{prop:cont_supnorm}
Let $V$ and $V'$ be potentials. Assume $V \geq \vmin >0$ and $\|V'-V\|_\infty < \vmin$. Then for $y \in \R^d$,
\begin{align*}
|\Gamma_V^2(y)-\Gamma_{V'}^2(y)| \leq \|V'-V\|_\infty \Gamma_V^2(y)/\vmin.
\end{align*}
\end{proposition}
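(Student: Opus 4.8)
The plan is a direct two-sided scaling estimate on the variational expression, in the spirit of \eqref{e:cV}. Write $\delta:=\|V'-V\|_\infty$. First I would check that $V'$ is again a potential: it is nonnegative since $V'\geq V-\delta\geq\vmin-\delta>0$ by the hypothesis $\delta<\vmin$, and $V'=V+(V'-V)\in L^1+L^\infty\subseteq L^1$ because $\P$ is a probability measure. Moreover $\Gamma_{V'}^2(y)$ is well defined: for each $f\in\F$ one has $0\leq\int|\nabla f|^2/(8f)+V'f\,d\P<\infty$ (using $\|f\|_\infty,\|\nabla f\|_\infty<\infty$, $f\geq c_f>0$ and $V'\in L^1$), and $\inf_{\phi\in\Phi_y}\int|\phi|^2/(2f)\,d\P$ is a nonnegative real number.

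Next I record the pointwise bound. From $|V'-V|\leq\delta$ $\P$-a.s.\ and $V\geq\vmin$ we get $\delta\leq(\delta/\vmin)\,V$, hence
\[
\Bigl(1-\tfrac{\delta}{\vmin}\Bigr)V\ \leq\ V'\ \leq\ \Bigl(1+\tfrac{\delta}{\vmin}\Bigr)V\qquad\P\text{-a.s.},
\]
with strictly positive left-hand factor. Fix $f\in\F$ and set $K_W(f):=\int|\nabla f|^2/(8f)+Wf\,d\P$ for a potential $W$. Multiplying the pointwise bound by $f\geq0$ and integrating, and using that the kinetic term $\int|\nabla f|^2/(8f)\,d\P\geq0$ (so that passing from $V$ to $\bigl(1+\tfrac{\delta}{\vmin}\bigr)V$ can only increase $K_V(f)$ by the factor $1+\tfrac{\delta}{\vmin}$, while passing to $\bigl(1-\tfrac{\delta}{\vmin}\bigr)V$ decreases it by at most the factor $1-\tfrac{\delta}{\vmin}$), we obtain
\[
\Bigl(1-\tfrac{\delta}{\vmin}\Bigr)K_V(f)\ \leq\ K_{V'}(f)\ \leq\ \Bigl(1+\tfrac{\delta}{\vmin}\Bigr)K_V(f).
\]

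Finally, since $C(f):=\inf_{\phi\in\Phi_y}\int|\phi|^2/(2f)\,d\P$ is nonnegative and does not involve the potential, I multiply the last display by $C(f)$ and take $4\inf_{f\in\F}$ throughout the chain (legitimate, as $1\pm\tfrac{\delta}{\vmin}$ are positive constants), which gives
\[
\Bigl(1-\tfrac{\delta}{\vmin}\Bigr)\Gamma_V^2(y)\ \leq\ \Gamma_{V'}^2(y)\ \leq\ \Bigl(1+\tfrac{\delta}{\vmin}\Bigr)\Gamma_V^2(y),
\]
which is exactly $|\Gamma_V^2(y)-\Gamma_{V'}^2(y)|\leq\|V'-V\|_\infty\,\Gamma_V^2(y)/\vmin$. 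There is no genuine obstacle; the only point requiring care is the direction of the two inequalities — scaling the potential up (resp.\ down) pointwise scales $K_V(f)$ up (resp.\ down) by \emph{at most} the same factor, precisely because the kinetic term is never scaled the wrong way, which is the same mechanism already exploited for \eqref{e:cV}.
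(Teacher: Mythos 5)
Your proof is correct and follows essentially the same route as the paper: the pointwise sandwich $(1-\delta/\vmin)V \leq V' \leq (1+\delta/\vmin)V$ combined with the fact that scaling the potential by $c\gtrless 1$ scales the whole functional by at most/at least $c$ (the content of \eqref{e:cV} and the remark following it), which the paper invokes via \eqref{eq:Gamma_monotone} and \eqref{e:cV} while you re-derive it inline for $K_{V'}(f)$. No gap; the extra checks that $V'$ is a potential and the care with the direction of the two inequalities are exactly the right points to verify.
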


\begin{proof}
Let $\epsilon:=\|V'-V\|_\infty$. Then $V' \leq V+\epsilon \leq V(1+\epsilon/\vmin)$. Now use \eqref{e:cV} in order to get the lower bound. The upper bound follows analogously from $V' \geq V-\epsilon \geq V(1-\epsilon/\vmin)$ and the corresponding inequality after \eqref{e:cV}.
\end{proof}

Consideration of continuity with respect to weak convergence of the potential as well as continuity with respect to the underlying measure turn out to be more delicate. While we are able to show upper semi-continuity, see Subsection \ref{subsec:semicontinuity}, lower semi-continuity does not hold in general as indicated by examples given in Section \ref{sec:counterex}. This resembles the situation in \cite{Cox1981,Le2013} where the proof of the lower bound was more involved than the proof of the upper bound.

\subsection{Denseness}

In Subsection \ref{subsec:semicontinuity} we study continuity of $\Gamma_V$ with respect to weak convergence of the underlying probability measure $\P$ on $\Omega$, and we therefore need to introduce function spaces of continuous functions. Assume $\Omega$ is a topological space such that $\FF$ is the Borel $\sigma$-algebra. We set
\begin{compactitem}[]
\item $\D^c :=\{f \in \D: \;\forall n \in \N_0 \; D^n f \;\text{is continuous w.r.t.\ the topology on}\; \Omega\},$
\item $\F^c := \F \cap \D^c$,
\item $\Phi_y^c := \Phi_y\cap (\D^c)^d$.
\end{compactitem}
We need the following condition on $(\Omega,\FF,\P,\tau)$:
\assumptionapplications{\condT{}}
{\label{item:condT}$\Omega$ is a completely regular, first countable Hausdorff space s.t.\ $\FF$ is the Borel $\sigma$-algebra, $\P$ is a Radon measure, the mapping $\omega \mapsto \tau_x\omega$ is continuous for all $x$.}

In \cite[Proposition 2.2]{Ruess2013} it is shown that if $V \in L^2$ we may replace the function spaces in the definition of $\Gamma_V(y)$ by any of the sets in $\FFF$ and $\PPPhi_y$ without changing $\Gamma_V(y)$.


\begin{proposition} \label{lem:spacesdense_continuous}
Assume that $(\Omega,\FF,\P,\tau)$ satisfies \condT{}. Then $\D^c$ is dense in $L^2$.
Moreover, 
$\D^c \in \DDD$, $\F^c \in \FFF$ and for any $y \in \R^d$ one has $\Phi_y^c\in\PPPhi_y$.
\end{proposition}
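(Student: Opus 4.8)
The plan is to build the approximating smooth functions by convolution against a mollifier on the acting group $\R^d$, exploiting the continuity of $\omega\mapsto\tau_x\omega$ guaranteed by \condT{}. Concretely, given $f\in L^2$ and a smooth compactly supported nonnegative kernel $\rho_\epsilon$ on $\R^d$ with $\int\rho_\epsilon\,d\leb=1$, I would set
\[
f_\epsilon(\omega):=\int_{\R^d}\rho_\epsilon(x)\,f(\tau_x\omega)\,\leb(dx).
\]
First I would check that $f_\epsilon$ makes sense and lands in $\D^c$: the realisation $(f_\epsilon)_\omega(z)=\int\rho_\epsilon(x)f(\tau_{x+z}\omega)\,dx$ is, after the substitution $x\mapsto x-z$, a convolution of the bounded-in-$L^2$ (locally, by stationarity) function $x\mapsto f(\tau_x\omega)$ against a $C^\infty_c$ kernel, hence $C^\infty$ in $z$ with all derivatives obtained by differentiating $\rho_\epsilon$; the bounds $\sup_\Omega|D^nf_\epsilon|\le\|D^n\rho_\epsilon\|_1\cdot(\text{local }L^1\text{ bound of }f)$ — wait, one needs $f\in L^2$ only, so I would instead bound $|D^nf_\epsilon(\omega)|\le\|D^n\rho_\epsilon\|_2\,\|f(\tau_\cdot\omega)\|_{L^2(\supp\rho_\epsilon)}$ and use stationarity to see this is finite $\P$-a.s.; to get a \emph{uniform} (over $\Omega$) bound one uses that $\omega\mapsto\int\rho_\epsilon(x)^2|f(\tau_x\omega)|^2dx$ has finite expectation, so it is finite a.s., but genuine uniform boundedness over all of $\Omega$ needs $f$ itself bounded. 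The clean route is therefore: first prove density of $\D^c$ in $L^2$ by approximating an arbitrary $f\in L^2$ first by a bounded $f$ and then mollifying (for bounded $f$, $\|D^nf_\epsilon\|_\infty\le\|D^n\rho_\epsilon\|_1\|f\|_\infty<\infty$ and continuity in $\omega$ follows from continuity of $x\mapsto\tau_x\omega$ together with dominated convergence and, for the topology on $\Omega$, the Radon/regularity hypotheses); and $\|f_\epsilon-f\|_2\to0$ as $\epsilon\to0$ by the standard strong-continuity argument for the translation group, which here reads $\int\|f(\tau_x\cdot)-f\|_2\,\rho_\epsilon(x)dx\to0$ because $x\mapsto f\circ\tau_x$ is $L^2$-continuous (a consequence of $\tau$ being a measurable, measure-preserving action — this is where one invokes continuity of $\tau$ plus density of continuous functions, or a standard approximation lemma).

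Second, I would establish $\D^c\in\DDD$. By definition $\DDD$ consists of subsets of $\D_w$ that are dense in the $\|\cdot\|_\nabla$-norm. Since $\D^c\subset\D\subset\D_w$ (smooth realisations with bounded derivatives are in particular weakly differentiable with $L^2$ derivatives, using boundedness of $\Omega$'s measure, i.e. $\P$ a probability measure), it remains to show $\|\cdot\|_\nabla$-density. Here one uses that $\partial_i f_\epsilon=(\partial_i f)_\epsilon$ — the mollification commutes with weak differentiation, because $\del_i$ of a translate is the translate of $\del_i$ on the group side, a standard identity — so mollifying any $f\in\D_w$ gives $f_\epsilon\to f$ and $\partial_if_\epsilon=(\partial_if)_\epsilon\to\partial_if$ simultaneously in $L^2$, hence in $\|\cdot\|_\nabla$. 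The same truncation caveat applies: one first approximates $f\in\D_w$ in $\|\cdot\|_\nabla$ by an element whose realisations and gradients are bounded (truncating $f$ and using that the weak derivative of $f\wedge M\vee(-M)$ is controlled), then mollifies. So the argument is: truncate, then mollify, then pass to the limit.

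Third, for $\F^c\in\FFF$ and $\Phi^c_y\in\PPPhi_y$ I would mollify elements of $\F_w$ and $\Phi^w_y$ respectively and verify the extra structural constraints survive. For $f\in\F_w$: $f_\epsilon$ again satisfies $\E f_\epsilon=1$ (Fubini plus $\tau$-invariance of $\P$), $f_\epsilon\ge c_f$ pointwise (average of functions $\ge c_f$), and $\|f_\epsilon\|_\infty,\|\nabla f_\epsilon\|_\infty$ are controlled by the corresponding norms of $f$ times $\|\rho_\epsilon\|_1=1$ resp.\ $\|\nabla\rho_\epsilon\|_1$ — so no truncation is needed here since $f\in\F_w$ is already bounded with bounded gradient — and $f_\epsilon\to f$ in $\|\cdot\|_\nabla$ as before; the condition in the definition of $\FFF$ requiring a uniform lower bound $\inf_n f_{\epsilon_n}>c$ along the approximating sequence is met with $c=c_f/2$, say, or simply $c_f$, since every $f_\epsilon\ge c_f$. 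For $\phi\in\Phi^w_y$: mollification preserves $\E\phi_\epsilon=y$ and, crucially, preserves the divergence-free condition $\E[\phi_\epsilon\cdot\nabla w]=0$ for all $w\in\D$ — this follows because $\E[\phi_\epsilon\cdot\nabla w]=\int\rho_\epsilon(x)\E[(\phi\circ\tau_x)\cdot\nabla w]\,dx=\int\rho_\epsilon(x)\E[\phi\cdot(\nabla w)\circ\tau_{-x}]\,dx$ and $(\nabla w)\circ\tau_{-x}=\nabla(w\circ\tau_{-x})$ with $w\circ\tau_{-x}\in\D$, so each integrand vanishes; then $\|\phi_\epsilon-\phi\|_2\to0$. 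Hence $\Phi^c_y$ is $\|\cdot\|_2$-dense in $\Phi^w_y$, giving $\Phi^c_y\in\PPPhi_y$.

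\textbf{Main obstacle.} The genuinely delicate point is the interplay between the \emph{topology} on $\Omega$ and the measure-theoretic mollification: proving that $f_\epsilon$ is actually \emph{continuous} (and all its group-derivatives continuous) as a function on $\Omega$, not merely measurable, requires using the full strength of \condT{} — complete regularity and first countability to control limits, and $\P$ Radon to pass between $L^2$-approximation and pointwise/uniform statements. Establishing the $L^2$-continuity of the translation action $x\mapsto f\circ\tau_x$ (for this one needs continuity of $\tau$ and an approximation of $f$ by bona fide continuous functions on $\Omega$, which is where Radon-ness of $\P$ enters via Lusin/Urysohn-type arguments) is the linchpin on which both the density and the smoothness claims rest; everything else is routine mollifier bookkeeping.
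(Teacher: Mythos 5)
There is a genuine gap at exactly the point you flag as the ``main obstacle'', and the mechanism you propose for it does not work. Convolving along the group, $f_\epsilon(\omega)=\int\rho_\epsilon(x)f(\tau_x\omega)\,dx$, smooths only in the orbit directions: it makes the realisations $z\mapsto f_\epsilon(\tau_z\omega)$ smooth, but it does nothing transverse to the orbits, so for a merely bounded measurable $f$ the function $f_\epsilon$ need not be continuous on $\Omega$. Your claim that continuity of $f_\epsilon$ ``follows from continuity of $x\mapsto\tau_x\omega$ together with dominated convergence'' presupposes that $\omega\mapsto f(\tau_x\omega)$ is continuous, i.e.\ that $f$ itself is already continuous on $\Omega$ --- which elements of $L^2$, $\F_w$ or $\Phi_y^w$ are not. (Concretely: take $f=1_A$ for a $\tau$-invariant measurable $A$ that is not topologically regular, e.g.\ on the point-process space $A=\{\omega:\omega(\R^d)=0\}$; then $f_\epsilon=f$ is discontinuous for every $\epsilon$, so $f_\epsilon\in\D$ but $f_\epsilon\notin\D^c$.) The same defect propagates to your third paragraph: mollifying $f\in\F_w$ or $\phi\in\Phi_y^w$ lands you in $\F$ resp.\ $\Phi_y$, not in $\F^c$ resp.\ $\Phi_y^c$, so the continuity requirement in the conclusion is never produced. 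Invoking Lusin only to get $L^2$-continuity of the translation group, as in your closing paragraph, does not repair this, because the issue is pointwise continuity on $\Omega$ of the approximants, not norm-continuity of translations.

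The missing idea, and the way the paper proceeds, is to make the function continuous on $\Omega$ \emph{before} convolving: by Lusin's theorem ($\P$ Radon) choose compact sets $K_n$ with $\P[K_n]\nearrow1$ on which $f$ is continuous, extend $f|_{K_n}$ to a continuous $g_n$ on all of $\Omega$ with the same upper and lower bounds (this uses that $\Omega$ is completely regular Hausdorff), recenter by $a_n:=1-\E g_n$ so that $f_n:=g_n+a_n$ keeps $\E f_n=1$ and a uniform positive lower bound, and only then form $f_n\ast\kappa_\epsilon$. Continuity of $f_n\ast\kappa_\epsilon$ and of all its derivatives then does follow by dominated convergence, using first countability to reduce continuity to sequential continuity; Young's inequality controls $\|f\ast\kappa_\epsilon-f_n\ast\kappa_\epsilon\|_\nabla$ by $\|f-f_n\|_2$, giving the $\|\cdot\|_\nabla$-approximation. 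The case of $\D^c\in\DDD$ is then obtained by an affine rescaling of $w\in\D$ into $\F$, and $\Phi_y^c\in\PPPhi_y$ is deduced from the $L^2$-density of $\D^c$ (which is stable under $\partial_i$ and $\tau_x$) via the projection lemma of the underlying paper together with the identification of weak divergence-free fields with $\Phi_y^w$, rather than by direct mollification of $\phi$; your computation that weak divergence-freeness and the mean are preserved under mollification is fine as far as it goes, but without the Lusin-extension step it cannot deliver membership in $\Phi_y^c$.
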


\begin{proof}
This proof uses the concept of convolution on $X$, see e.g.\ \cite[p. 232]{Jikov1994} or \cite[Lemma 4.4]{Ruess2013}. We need a `smoothing kernel' $\kappa \in C^\infty_c$ which is assumed to be an even function, $\kappa \geq 0$, and $\int_{\R^d} \kappa(x) dx =1$. We rescale $\kappa_\epsilon(x):= \epsilon^{-d} \kappa(x/\epsilon)$ for $\epsilon >0$.

We start by proving $\F^c \in \FFF$: Let $f \in \F_w$. Without restriction assume $f \leq \|f\|_\infty$ and $\inf_\Omega f >0$. Choose $\delta$ s.t.\ $\inf_\Omega f>\delta>0$. With \cite[(4.12),(4.14)]{Ruess2013} choose $\epsilon>0$ s.t.\ 
\begin{align}\label{eq:fn_approximates1}
& \|f \ast \g_\epsilon - f\|_\nabla \leq \delta/3.
\end{align}
Define $d_\epsilon:= \sup_i \int |\partial_i \g_\epsilon |d\leb$ and set $\delta_\epsilon:= \delta / (1 \vee d_\epsilon) \leq \delta$. By Lusin's Theorem there exists a sequence of compact sets $K_n \subset \Omega$, $n \in \N$, s.t.\ $f$ is continuous on $K_n$ for $n \in \N$ and $\P[K_n] \nearrow 1$ for $n \to \infty$, see e.g.\ \cite[Theorem 7.1.13]{Bogachev2007}. The function $f|_{K_n}$ can be extended from the compact set $K_n$ to a continuous function $g_n$ on whole $\Omega$ s.t.\ $g_n|_{K_n} = f|_{K_n}$ and $\inf_\Omega f \leq g_n \leq \|f\|_\infty$ as is is stated for completely regular Hausdorff spaces in \cite[Exercise 6.10.22]{Bogachev2007}. Choose $n_0 \in \N$ s.t.\ for $n \geq n_0$,
\begin{align*}
\P[K_n^c] \leq \delta_\epsilon (3  \|f\|_\infty)^{-1},
\end{align*}
where $K_n^c:=\Omega \setminus K_n$. Let $a_n:= 1- \E g_n = \E[f-g_n]$. For $n \geq n_0$,
\begin{align*}
|a_n| \leq \E[|f-g_n|,K_n^c] \leq \|f\|_\infty \P[K_n^c] \leq \delta_\epsilon/3.
\end{align*}
We set 
\begin{align*}
f_n:= g_n +a_n.
\end{align*}
Then $\E[ f_n ] = 1$. Moreover, since $\delta < \inf_\Omega f$ one has $\inf_{n \geq n_0} f_n \geq \inf_\Omega f +a_n \geq \delta/2 >0$. And also $f_n \ast \g_\epsilon \in \F$, use e.g.\ \cite[Lemma 4.4]{Ruess2013}.

Moreover, $\omega \mapsto f_{n,\omega} (x) \g_\epsilon(x)$ is continuous and bounded by $\|f_n\|_\infty \|\g_\epsilon\|_\infty$ for any $x$. Since $\Omega$ is first countable, continuity is equivalent to sequential continuity, see \cite[Corollary 10.5]{Willard1970}. Hence Lebesgue's dominated convergence theorem may be applied in order to show that $f_n \ast \g_\epsilon$ is continuous in $\omega$. ('continuity of integrals with respect to a parameter', see e.g.\ \cite[Lemma 16.1]{Bauer2001}). A similar argument together with equality $\partial_i (f_n\ast \g_\epsilon) = - f_n \ast (\partial_i \g_\epsilon)$ shows that $D^m f_{n,\epsilon}$ is continuous and bounded for any $m \in \N_0$. In particular, $f_n \ast \g_\epsilon \in \F^c$.

$f_n \ast \g_\epsilon$ approximates $f$: Indeed, for $n \geq n_0$,
\begin{align}\label{eq:fn_approximates2}
\|f - f_n \|_2 
&= \|f-g_n-a_n\|_2
\leq \|f-g_n\|_2 + |a_n| \leq \|f\|_\infty \P[K_n^c] + \delta_\epsilon/3 \leq 2 \delta_\epsilon/3.
\end{align}
Further, Young's inequality, see e.g.\ \cite[(4.11)]{Ruess2013}, gives
\begin{align}\label{eq:fn_approximates3}
\|f \ast \g_\epsilon -f_n \ast \g_\epsilon\|_2
\leq \|f-f_n\|_2.
\end{align}
By \eqref{eq:fn_approximates1}, \eqref{eq:fn_approximates2}, \eqref{eq:fn_approximates3} we get
\begin{align*}
\|f -f_n \ast \g_\epsilon\|_2
\leq \|f - f\ast \g_\epsilon\|_2 + \|f\ast \g_\epsilon -f_n \ast \g_\epsilon\|_2 \leq  \delta.
\end{align*}
We consider derivatives in an analogous manner: Again with Young's inequality
\begin{align}\label{eq:fn_approximates4}
\|\partial_i (f \ast \g_\epsilon) & - \partial_i (f_n \ast \g_\epsilon)\|_2
= \|f \ast \partial_i (\g_\epsilon) - f_n \ast \partial_i (\g_\epsilon)\|_2
\leq d_\epsilon \|f - f_n\|_2.
\end{align}
Hence \eqref{eq:fn_approximates1}, \eqref{eq:fn_approximates2}, \eqref{eq:fn_approximates4} imply
\begin{align*}
\|\partial_i f  - \partial_i (f_n \ast \g_\epsilon)\|_2
& = \|\partial_i f - \partial_i (f \ast \g_\epsilon) \|_2 + \|\partial_i (f \ast \g_\epsilon)  - \partial_i (f_n \ast \g_\epsilon)\|_2\\
& \leq \delta/3 + d_\epsilon 2 \delta_\epsilon/3 \leq \delta.
\end{align*}
This proves $\F^c \in \FFF$.

In order to show $\D^c \in \DDD$ note first, that it is sufficient to show $\D^c$ dense in $\D$ since $\D \subset \D_w$ in the desired way by \cite[Lemma 2.1]{Ruess2013}.
Let $w \in \D$, $w \neq 0$ and consider $\psi:=(w - \E w)/(2\|w - \E w\|_\infty) +1$. $\psi \in \F$ and we can apply the previous and get a sequence $\psi_n \to \psi$ in $\|\cdot\|_\nabla$, $(\psi_n)_n \subset \F^c$. Then $w_n:= (\psi_n -1)2\|w-\E w\|_\infty + \E w \to w$ in the desired way and $(w_n)_n \subset \D^c$. Thus, $\D^c \in \DDD$.

In order to examine $\Phi_y^c$, since $\D^c$ is dense in $L^2$ s.t.\ $\partial_i \D^c \subset \D^c$ and $\tau_x \D^c \subset \D^c$, we may apply \cite[Lemma 4.7]{Ruess2013}. Using the fact that the space of weak divergence-free vector fields with expectation $y$ equals $\Phi_y^w$, use \cite[(4.18)]{Ruess2013}, we get $(\D^c)^d\cap \Phi_y^w$ is dense in $\Phi_y^w$ with respect to $\|\cdot\|_2$. Any $\phi \in (\D^c)^d\cap \Phi_y^w$ equals up to an exceptional set some $\tilde \phi \in (\D^c)^d\cap\Phi_y$. This shows $\Phi_y^c \in \PPPhi_y$.
\end{proof}

\subsection{Semi-continuity}\label{subsec:semicontinuity}

Our first continuity result considers also weak $L^1$ convergence:

\begin{proposition}\label{prop:application_continuity}
Let $V$, $V_n$, $n \in \N$, be potentials on $\Omega$. Assume that for all $f \in \F$ one has $\limsup_{n \to \infty} \E[V_nf] \leq \E[Vf]$, then for any $y \in \R^d$,
\begin{align}\label{eq:semicont_basic}
\limsup_{n \to \infty} \Gamma_{V_n}(y) \leq \Gamma_V(y).
\end{align}
\end{proposition}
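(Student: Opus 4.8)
The plan is to exploit the fact that $\Gamma_V(y)^2$ is, up to the constant $4$, an infimum over $f \in \F$ and $\phi \in \Phi_y$ of the product
$\E[|\nabla f|^2/(8f) + Vf]\cdot\E[|\phi|^2/(2f)]$, and that only the first factor depends on $V$. So the essential observation is that for each \emph{fixed} $f \in \F$, the quantity $\E[|\nabla f|^2/(8f) + Vf]$ behaves upper semicontinuously in the relevant sense. Concretely, fix $y \in \R^d$ and let $\eta > 0$. By definition of $\Gamma_V(y)$ as an infimum, choose $f \in \F$ and $\phi \in \Phi_y$ with
\[
4\left(\int \frac{|\nabla f|^2}{8f} + Vf \, d\P\right)\left(\int \frac{|\phi|^2}{2f}\, d\P\right) \leq \Gamma_V(y)^2 + \eta.
\]
Here I should note that the second factor $\E[|\phi|^2/(2f)]$ is finite and strictly positive (it is bounded below by $|y|^2/2$ via \eqref{eq:B_lowerbound}, and bounded above since $\phi$ is bounded and $f \geq c_f$), so dividing by it causes no trouble.

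Next I would plug this same pair $(f,\phi)$ into the variational expression for $\Gamma_{V_n}(y)$ — which is legitimate since $f$ and $\phi$ do not depend on $n$ — to obtain
\[
\Gamma_{V_n}(y)^2 \leq 4\left(\int \frac{|\nabla f|^2}{8f} + V_n f\, d\P\right)\left(\int \frac{|\phi|^2}{2f}\, d\P\right).
\]
Now take $\limsup_{n\to\infty}$ on both sides. The second factor is a fixed constant. For the first factor, the term $\int |\nabla f|^2/(8f)\,d\P$ does not depend on $n$, and by the hypothesis applied to this particular $f \in \F$ we have $\limsup_{n} \E[V_n f] \leq \E[V f]$. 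Hence
\[
\limsup_{n\to\infty}\Gamma_{V_n}(y)^2 \leq 4\left(\int \frac{|\nabla f|^2}{8f} + V f\, d\P\right)\left(\int \frac{|\phi|^2}{2f}\, d\P\right) \leq \Gamma_V(y)^2 + \eta.
\]
Since $\eta > 0$ was arbitrary, $\limsup_n \Gamma_{V_n}(y)^2 \leq \Gamma_V(y)^2$, and taking square roots (using monotonicity and continuity of $\sqrt{\cdot}$) yields \eqref{eq:semicont_basic}.

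I do not expect a genuine obstacle here; the argument is the standard "infimum of a family of upper semicontinuous functions is upper semicontinuous" principle, and the only points requiring a line of care are (i) confirming that the minimizing-up-to-$\eta$ pair $(f,\phi)$ can be chosen with both factors finite — which is immediate since any admissible $f,\phi$ already give a finite product and $\F, \Phi_y$ are nonempty — and (ii) checking that the hypothesis $\limsup_n \E[V_n f] \leq \E[V f]$ is exactly what is needed, with no integrability issue since $f$ is bounded and each $V_n$ is a potential (hence in $L^1$). One should perhaps also remark that if $\Gamma_V(y) = 0$ (e.g. $y = 0$) the statement is trivial, so the division step is only invoked when the second factor is positive, which it always is by \eqref{eq:B_lowerbound} when $y \neq 0$.
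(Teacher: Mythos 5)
Your proof is correct and follows essentially the same route as the paper: the paper writes $\limsup_n \Gamma_{V_n}(y)=\inf_{n}\sup_{m\geq n}\Gamma_{V_m}(y)$ and interchanges this with $\inf_{f\in\F}\inf_{\phi\in\Phi_y}$, which is exactly your argument of fixing an $\eta$-near-minimizing pair $(f,\phi)$ and using the hypothesis for that fixed $f$. The only cosmetic remark is that no division by the second factor actually occurs in your argument, so the aside about its positivity is unnecessary (finiteness, which you also note, is what matters).
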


Note that as soon as $V_n$, $n \in \N$, and $V$ are potentials in $L^2$, in order to obtain \eqref{eq:semicont_basic} it suffices to know that there exists a set $\tilde \F \in \FFF$ such that for all $f \in \tilde \F$ the condition $\limsup_{n \to \infty} \E[V_nf] \leq \E[Vf]$ is satisfied, use e.g.\ \cite[Proposition 2.2]{Ruess2013}.

\begin{proof}
By definition $\limsup_{n \to \infty} \Gamma_{V_n}(y)$ $=$ $\inf_{n \geq 0} \sup_{m \geq n} \Gamma_{V_m}(y)$. After an interchange of $\inf_{n \geq 0} \sup_{m \geq n}$ and $\inf_{f \in \F} \inf_{\phi \in \Phi_y}$ in the variational expression the statement follows.
\end{proof}

In order to study continuity with respect to weak convergence of the underlying probability measure, assume $\Omega$ is a topological space: Recall the condition (T) introduced on page \pageref{item:condT}.

\begin{theorem}\label{prop:application_weakcontinuity}
Assume $(\Omega,\FF,\P,\tau)$ satisfies \condT{} and $V$ is a potential, which is bounded and continuous with respect to the topology on $\Omega$.
Let $(\P_n)_n$ be a sequence of Radon probability measures on $(\Omega,\FF)$ such that $(\Omega,\FF,\P_n,\tau)$ is a metric dynamical system for all $n \in \N$. If $\P_n \to \P$ weakly, then for any $y \in \R^d$, with obvious notation,
\[
\limsup_{n \to \infty} \Gamma_{V}^{\P_n}(y) \leq \Gamma_V^{\P}(y).
\]
\end{theorem}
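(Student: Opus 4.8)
The plan is to reduce the statement to Proposition~\ref{prop:application_continuity} by choosing an appropriate class of test functions on which weak convergence of $\P_n$ to $\P$ controls the relevant integrals. Concretely, since $V$ is bounded and continuous and $(\Omega,\FF,\P,\tau)$ satisfies \condT{}, Proposition~\ref{lem:spacesdense_continuous} tells us that $\F^c \in \FFF$, $\D^c \in \DDD$ and $\Phi_y^c \in \PPPhi_y$. By \cite[Proposition 2.2]{Ruess2013} we may therefore compute all the variational expressions $\Gamma_V^{\P_n}(y)$ and $\Gamma_V^{\P}(y)$ using only the continuous test classes $\F^c$ and $\Phi_y^c$ — note these classes are defined purely in terms of the topology, the action $\tau$, and smoothness of realisations, hence do not depend on which measure we use. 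This is the key point: the set of admissible test pairs $(f,\phi)$ is the same for every $\P_n$ and for $\P$.

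Next I would fix a continuous test pair $f \in \F^c$, $\phi \in \Phi_y^c$ and examine the four integrals appearing in the variational functional, namely $\E_{\P_n}[|\nabla f|^2/(8f)]$, $\E_{\P_n}[Vf]$ and $\E_{\P_n}[|\phi|^2/(2f)]$ (together with $\E_{\P_n}[f]=1$, $\E_{\P_n}[\phi]=y$, which one must also check are preserved — here a subtlety arises because $f \in \F^c$ only guarantees $\E_\P f = 1$, not $\E_{\P_n} f = 1$, so one renormalises by setting $\hat f_n := f/\E_{\P_n}[f]$ and similarly adjusts $\phi$, using that $\E_{\P_n}[f] \to \E_\P[f] = 1$ and $\E_{\P_n}[\phi]\to y$ by weak convergence applied to the bounded continuous functions $f$, $\phi_i$). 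Each integrand — $|\nabla f|^2/(8f)$, $Vf$, $|\phi|^2/(2f)$ — is bounded (using $f \geq c_f > 0$, $\|f\|_\infty, \|\nabla f\|_\infty < \infty$, $V$ bounded, $\phi \in \D^c$ hence bounded) and continuous on $\Omega$, so weak convergence $\P_n \to \P$ gives $\E_{\P_n}$ of each of them converging to $\E_\P$ of the same. Hence for this fixed pair, the product $4\,\E_{\P_n}[|\nabla \hat f_n|^2/(8\hat f_n) + V\hat f_n]\,\E_{\P_n}[|\hat\phi_n|^2/(2\hat f_n)]$ converges to $(\Gamma_V^\P)^2(y)$ evaluated at $(f,\phi)$, which in turn can be taken arbitrarily close to $(\Gamma_V^\P)^2(y)$ by choosing $(f,\phi)$ nearly optimal in the continuous classes.

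Putting this together: given $\varepsilon > 0$, pick $f \in \F^c$, $\phi \in \Phi_y^c$ with $4\,\E_\P[\cdots]\E_\P[\cdots] \leq (\Gamma_V^\P)^2(y) + \varepsilon$; then for $n$ large the renormalised pair $(\hat f_n, \hat\phi_n)$ is admissible for $\Gamma_V^{\P_n}$ and gives a value $\leq (\Gamma_V^\P)^2(y) + 2\varepsilon$, whence $\limsup_n (\Gamma_V^{\P_n})^2(y) \leq (\Gamma_V^\P)^2(y) + 2\varepsilon$; letting $\varepsilon \to 0$ and taking square roots finishes the proof. The main obstacle I anticipate is the bookkeeping around the normalisation constraints $\E f = 1$ and $\E\phi = y$: the continuous test function, optimal for $\P$, is slightly off-constraint for $\P_n$, and one must verify that the small correction (dividing $f$ by $\E_{\P_n}f$, and adding a constant vector field to $\phi$ to restore $\E_{\P_n}\phi = y$) keeps the pair inside $\F^c$ and $\Phi_y^c$ — in particular that the lower bound $\hat f_n \geq c_f/(1+o(1)) > 0$ and the divergence-free property of the adjusted $\phi$ are maintained — and that the resulting perturbation of the functional value is $o(1)$. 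A secondary point worth a line is justifying that, although Proposition~\ref{lem:spacesdense_continuous} is applied with the measure $\P$, the density statements we actually need are simply that $\F^c$ and $\Phi_y^c$ are nonempty and rich enough to approximate the infimum for $\P$; the approximation for $\P_n$ is obtained indirectly through the convergence just described rather than by invoking denseness with respect to $\P_n$.
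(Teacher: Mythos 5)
Your proposal is correct and follows essentially the same route as the paper's proof: the renormalisations $\hat f_n = f/\E_{\P_n}[f]$ and $\hat\phi_n = \phi - \E_{\P_n}\phi + y$ are exactly the bijections $\F^c \to \F_n^c$, $\Phi_y^c \to \Phi_{y,n}^c$ used there, the passage to the limit for a fixed continuous pair via weak convergence of bounded continuous integrands is the same, and the identification of the continuous-class infimum with $\Gamma_V^{\P}(y)$ rests on Proposition~\ref{lem:spacesdense_continuous} together with \cite[Proposition 2.2]{Ruess2013}, just as in the paper. Your closing remark that denseness is only needed for the limiting measure $\P$, while for $\P_n$ one merely needs the renormalised continuous pairs to be admissible (giving an upper bound), is precisely the role these ingredients play in the paper's argument.
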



\begin{proof}
Let $\F_n^c$ and $\Phi_{y,n}^c$ denote the function spaces with respect to $\P_n$. We denote with $\E_n$ the expectation operator with respect to $\P_n$. Then one has bijective mappings
\begin{align*}
\F^c \to \F_n^c &: \; f \mapsto \tilde f:=f/\E_n[f], \text{ and }
\Phi_y^c \to \Phi_{y,n}^c : \phi \mapsto \tilde \phi := \phi - \E_n\phi +y.
\end{align*}
Therefore,
\begin{align*}
\limsup_{n \to \infty} \Gamma_{V}^{\P_n}(y)
&\leq 2 \limsup_{n \to \infty} \inf_{f \in \F_n^c} \inf_{\phi \in \Phi_{n,y}^c} \left(\E_n\left[\frac{|\nabla f|^2}{8f} + Vf\right]\,\E_n\left[\frac{|\phi|^2}{2f}\right]\right)^{1/2}\\
&= 2 \limsup_{n \to \infty} \inf_{f \in \F^c} \inf_{\phi \in \Phi_{y}^c} \left(\E_n\left[\frac{|\nabla f|^2}{8f} + Vf\right]\,\E_n\left[\frac{|\phi - \E_n \phi + y|^2}{2f}\right]\right)^{1/2}
\end{align*}
Similar as in the proof of Proposition \ref{prop:application_continuity}, the latter is lower or equal
\begin{align}\label{eq:weakcontinuity0}
2 \inf_{f \in \F^c} \inf_{\phi \in \Phi_y^c} \left(\inf_{n \geq 0}\sup_{m \geq n}\E_m\left[\frac{|\nabla f|^2}{8f} + Vf\right]\,\E_m\left[\frac{|\phi- \E_m \phi + y|^2}{2f}\right]\right)^{1/2}.
\end{align}
$\nabla f$ is continuous and bounded for $f \in \F^c$. So is $Vf$ by assumptions on $V$. Thus, weak convergence of $\P_n$ to $\P$ implies for $f \in \F^c$,
\begin{align}\label{eq:weakcontinuity1}
\E_n\left[\frac{|\nabla f|^2}{8f} + Vf\right] \to \E\left[\frac{|\nabla f|^2}{8f} + Vf\right] \text{ as } n \to \infty.
\end{align}
Again weak convergence shows for $\phi \in \Phi_y^c$ that $\E_n \phi \to \E \phi$ for $n \to \infty$. Therefore, $\E_n[|y-\E_n \phi|^2/(2f)] \leq (2\min_\Omega f)^{-1}|y-\E_n\phi|^2 \to 0$, and we get for $n \to \infty$,
\begin{align}\label{eq:weakcontinuity2}
\E_n\left[\frac{|\phi- \E_n \phi + y|^2}{2f}\right]
& =\E_n\left[\frac{|\phi|^2}{2f} + \frac{|y-\E_n \phi|^2}{2f} + \frac{2\phi \cdot ( y - \E_n \phi)}{2f}\right]
\to \E\left[ \frac{|\phi|^2}{2f}\right].
\end{align}
By Lemma \ref{lem:spacesdense_continuous} and \cite[Proposition 2.2]{Ruess2013} we can substitute the spaces $\F^c$ and $\Phi_y^c$ with the spaces $\F$, $\Phi_y$ in the definition of $\Gamma_V$, and we get with \eqref{eq:weakcontinuity0}, \eqref{eq:weakcontinuity1}, \eqref{eq:weakcontinuity2},
\begin{align*}
\limsup_{n \to \infty} \Gamma_{V}^{\P_n}(y)
\leq 2 \inf_{f \in \F^c} \inf_{\phi \in \Phi_y^c} \left( \E\left[\frac{|\nabla f|^2}{8f} + Vf\right] \, \E\left[\frac{|\phi|^2}{2f}\right]\right)^{1/2}
= \Gamma_V^{\P}(y),
\end{align*}
which was to be shown.
\end{proof}

\subsection{Scaling}

The variational formula also enables to determine convergence rates if scaled potentials are considered:

\begin{proposition}\label{prop:application_convergencespeed}
Let $c\geq 0$ and $V$ be a potential. Let $V_n:=V/n$. Then for all $y \in \R^d$,
\begin{align}\label{e:convspeed}
\Gamma_{V}^2(y) \leq n(\Gamma_{c+V_n}^2(y) - \Gamma_c^2(y)) \leq 2\E[V]|y|^2.
\end{align}
\end{proposition}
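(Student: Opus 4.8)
The plan is to extract both inequalities from the squared estimates established in Proposition~\ref{prop:application_basic}, together with the linearity in $c$ from \eqref{app:e:linearity}. For the right-hand inequality I would first apply \eqref{e:c+Vgeq} in the form $\Gamma_{c+V_n}^2(y) \geq \Gamma_{V_n}^2(y) + \Gamma_c^2(y)$, which rearranges to $\Gamma_{c+V_n}^2(y) - \Gamma_c^2(y) \geq \Gamma_{V_n}^2(y) = \Gamma_{V/n}^2(y)$. To turn this into a statement about $\Gamma_V^2(y)$ I would use the scaling inequality \eqref{e:cV} applied with the constant $n \geq 1$ and the potential $V_n = V/n$, namely $\Gamma_{n \cdot V_n}^2 \leq n\,\Gamma_{V_n}^2$, i.e.\ $\Gamma_V^2(y) \leq n\,\Gamma_{V_n}^2(y)$. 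Combining these gives $\Gamma_V^2(y) \leq n\,\Gamma_{V_n}^2(y) \leq n(\Gamma_{c+V_n}^2(y) - \Gamma_c^2(y))$; wait — the chain must be checked for direction, since \eqref{e:cV} points the wrong way here. Let me reorganise: the correct route for the lower bound on $n(\Gamma_{c+V_n}^2 - \Gamma_c^2)$ is instead to use the ``analogous'' inequality to \eqref{e:cV} for $0 \le c' \le 1$, applied with $c' = 1/n$ and potential $V$: $\Gamma_{V/n}^2 = \Gamma_{(1/n)V}^2 \geq (1/n)\Gamma_V^2$, hence $n\,\Gamma_{V_n}^2(y) \geq \Gamma_V^2(y)$, and then $n(\Gamma_{c+V_n}^2(y)-\Gamma_c^2(y)) \geq n\,\Gamma_{V_n}^2(y) \geq \Gamma_V^2(y)$, which is the left inequality.

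For the right-hand inequality $n(\Gamma_{c+V_n}^2(y) - \Gamma_c^2(y)) \leq 2\E[V]|y|^2$, I would proceed directly from the variational expression. Plugging $f \equiv 1$ into the definition of $\Gamma_{c+V_n}$ and using the lower bound \eqref{eq:B_lowerbound} together with monotonicity, one gets an upper bound $\Gamma_{c+V_n}^2(y) \leq 4\bigl(\E[c + V_n]\bigr)\bigl(\inf_{\phi \in \Phi_y}\E[|\phi|^2/2]\bigr)$; but the infimum over $\phi$ with $f\equiv 1$ is exactly $|y|^2/2$ (attained at $\phi \equiv y$, and bounded below by \eqref{eq:B_lowerbound}), so $\Gamma_{c+V_n}^2(y) \leq 2(c + \E[V]/n)|y|^2$. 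Since $\Gamma_c^2(y) = 2c|y|^2$ by \eqref{e:lyapunovExponent_forconstpot}, subtracting gives $\Gamma_{c+V_n}^2(y) - \Gamma_c^2(y) \leq 2(\E[V]/n)|y|^2$, and multiplying by $n$ yields the claim. Actually I should double-check whether $f \equiv 1$ is an admissible element: it lies in $\F$ (it is a probability density, bounded below, smooth, all derivatives bounded), so this is fine.

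The main obstacle — really the only subtle point — is getting the directions of the scaling inequalities from Proposition~\ref{prop:application_basic} exactly right: one must use $\Gamma_{cV}^2 \geq c\Gamma_V^2$ for $0 \le c \le 1$ (not the $c \ge 1$ version) to pass from $\Gamma_{V/n}^2$ down to $\frac1n\Gamma_V^2$, and one must not accidentally invoke the reverse bound. Everything else is a routine substitution into the variational formula plus the already-established facts \eqref{e:c+Vgeq}, \eqref{e:lyapunovExponent_forconstpot}, \eqref{eq:B_lowerbound}, and the monotonicity \eqref{eq:Gamma_monotone}. I would write the two inequalities as two short displayed chains, citing the relevant equation numbers at each step.
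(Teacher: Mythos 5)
Your proposal is correct and takes essentially the same route as the paper: the lower bound via \eqref{e:c+Vgeq} combined with the scaling inequality of Proposition \ref{prop:application_basic} (your $0\le c\le 1$ version applied with $c=1/n$ is equivalent to the paper's citation of \eqref{e:cV} applied to $V_n$ with factor $n$, so your ``wrong direction'' worry was a false alarm --- your first chain was already valid), and the upper bound via $\Gamma_{c+V_n}^2(y)\le 2(c+\E[V]/n)|y|^2$ together with \eqref{e:lyapunovExponent_forconstpot}, which is exactly the paper's use of \eqref{eq:GammaV_leq_GammaEV} made explicit by plugging $f\equiv 1$ and $\phi\equiv y$ into the variational formula. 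No gaps.
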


The rate of convergence as in Proposition \ref{prop:application_convergencespeed} for scaled potentials has been investigated previously in the discrete space setting of random walk in i.i.d.\ integrable potential in \cite{Wang02} and \cite{Kosygina10}. If $V$ is not necessarily integrable the asymptotic behaviour has been recently established in the discrete setting in \cite{Mountford2012} and \cite{Mountford2014}. For Brownian motion in Poissonian potential convergence speed is established in \cite{Ruess2012}. In \cite{Kosygina10, Mountford2012, Ruess2012} $c$ is assumed to equal zero. In \cite{Kosygina10, Ruess2012} the speed of convergence to zero of $\alpha_{V_n}$ has been determined to equal $n^{-1/2} \sqrt{2 \E[V]}|y|$. This coincides with the convergence speed $n^{-1/2}$ obtained from \eqref{e:convspeed} for $c=0$.

Additional assumptions allow to improve these results. For periodic potentials in Theorem \ref{prop:continuityonthetorus_speedofconvergence} we get exact rates of convergence for more general scalings of the potential. That Proposition \ref{prop:application_convergencespeed} is essentially all one might expect in general is illustrated by an example given in Subsection \ref{subsec:counterex:scaling}.

\begin{proof}[Proof of Proposition \ref{prop:application_convergencespeed}]
One has
$
n^{-1} \Gamma_{nV_n}^2(y) \leq \Gamma_{c+V_n}^2(y) - \Gamma_c^2(y) \leq 2\E[V_n]|y|^2,
$
where the upper bound follows from \eqref{eq:GammaV_leq_GammaEV} and \eqref{e:lyapunovExponent_forconstpot}, the lower bound from \eqref{e:c+Vgeq} and \eqref{e:cV}. Since $nV_n=V$ this shows the statement.
\end{proof}

\subsection{Continuity on the Torus}\label{subsec:semicontinuity_compact}

The results obtained in Subsection \ref{subsec:semicontinuity} can be improved considerably if the underlying space $\Omega$ is assumed to be compact: In the case that $X=X^{\tor{1}}$ where $\Omega$ is the one dimensional torus, see page \pageref{app:subsec:examples}, we get the following. We abbreviate for $f \in \F_w$,
\begin{align*}
B(f):=\inf_{\phi \in \Phi_y}\int \frac{|\phi|^2}{2f}d\P.
\end{align*}

\begin{theorem}\label{prop:continuityonthetorus}
Let $X=X^{\tor{1}}$. Let $V_n$, $n \in \N$, and $V$ be potentials such that $V_n \to V$ in $L^1$ and $V \geq \vmin>0$. Then there is a constant $C>0$, depending only on $\E[V]$ and $\vmin$, and there is $n_0 \in \N$ such that for $n \geq n_0$, for $y\in\R^d$,
\begin{align*}
|\Gamma_{V_n}^2(y)-\Gamma_V^2(y)| \leq C \|V_n -V\|_1|y|^2.
\end{align*}
\end{theorem}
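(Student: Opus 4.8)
The plan is to exploit the variational formula $\Gamma_V^2(y) = 4\inf_{f\in\F_w}\bigl(\E[|\nabla f|^2/(8f)+Vf]\bigr)B(f)$, where, following Proposition \ref{lem:spacesdense_continuous} and \cite[Proposition 2.2]{Ruess2013}, we work over $\F_w$, and to show that on the one-dimensional torus the optimizing density $f$ may be taken from a compact family, so that the $L^1$-convergence $V_n\to V$ transfers to the objective functionals uniformly. First I would record that since $V\ge\vmin>0$ and $\E[V]<\infty$, \eqref{eq:comparison_with_qfe} and the explicit constant-potential value \eqref{e:lyapunovExponent_forconstpot} pin $\Gamma_V^2(y)$ between $2\vmin|y|^2$ and $2\E[V]|y|^2$ (for $|y|=1$, then rescale by \eqref{app:e:linearity}); the same holds for $V_n$ once $n$ is large, because $\|V_n-V\|_1\to0$ forces $\E[V_n]\to\E[V]$, and on the compact torus one also needs a uniform lower bound $V_n\ge\vmin/2$, say, on a set of measure close to $1$ — actually, since we only need an upper bound of the form $\E[V_n]\le 2\E[V]$ eventually, the lower bound $\vmin$ on $V$ alone suffices together with $\Gamma_{V_n}^2 \le \Gamma_{\E V_n}^2$ from \eqref{eq:GammaV_leq_GammaEV}.

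The heart of the argument is an a priori bound on the near-minimizers. Fix $|y|=1$. For $f\in\F_w$ with $f\ge c_f$ one has $B(f)\ge 1/(2\|f\|_\infty)$ trivially is false in the wrong direction; instead use \eqref{eq:B_lowerbound}, giving $B(f)\ge|y|^2/2 = 1/2$, and also $B(f)\le \int (2f)^{-1}d\P$ by taking $\phi\equiv y$. Hence if $f$ is within a factor $2$ of the infimum for $V$, then $\E[|\nabla f|^2/(8f)]\,B(f)\le \Gamma_V^2(y)/2 \le \E[V]$, so $\E[|\nabla f|^2/(8f)] \le 2\E[V]$, i.e. $\E[|\nabla\sqrt f|^2]\le 4\E[V]$, and similarly $\E[Vf]\le \E[V]$ forces $\E[f]=1$ together with $f\ge\vmin\cdot(\text{something})$... more usefully, $\E[Vf]\le\E[V]/2 < \infty$. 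On the one-dimensional torus, the bound $\int_{\T}|(\sqrt f)'|^2\,d\leb\le 4\E[V]$ together with $\int_\T f\,d\leb=1$ gives, via the one-dimensional Sobolev/Poincaré inequality, a uniform modulus of continuity for $\sqrt f$ and hence a uniform bound $\|f\|_\infty\le M$ with $M=M(\E[V])$; crucially it also gives a uniform positive lower bound $\inf_\T f\ge m>0$ with $m=m(\E[V],\vmin)$, because if $\sqrt f$ dipped near $0$ somewhere, the energy bound would force it to stay small on a macroscopic interval, contradicting $\int f=1$ — this is where compactness (finite total length of $\T$) is essential and is the step I expect to be the main obstacle to make fully rigorous. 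So after this reduction, both $\Gamma_{V_n}^2(y)$ and $\Gamma_V^2(y)$ equal, up to a factor of $2$, infima over the \emph{same} compact set $\F_w^M:=\{f\in\F_w: m\le f\le M,\ \E[|\nabla f|^2/(8f)]\le 2\E[V]+1\}$ (one must check the near-minimizers for $V_n$ also land in $\F_w^M$, using $\E[V_n]\le 2\E[V]$).

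Once the competition is restricted to $\F_w^M$, the difference is controlled linearly: for $f\in\F_w^M$,
\begin{align*}
\bigl|\E[V_nf]-\E[Vf]\bigr| \le \|f\|_\infty\|V_n-V\|_1 \le M\|V_n-V\|_1,
\end{align*}
while the gradient term $\E[|\nabla f|^2/(8f)]$ and the factor $B(f)$ do not involve the potential at all; moreover $B(f)\le \E[(2f)^{-1}]\le 1/(2m)$ is uniformly bounded on $\F_w^M$. Writing $K_V(f):=\E[|\nabla f|^2/(8f)+Vf]$, so $\Gamma_V^2(y)=4\inf_{f\in\F_w^M}K_V(f)B(f)$ (for $|y|=1$), we get $|K_{V_n}(f)B(f)-K_V(f)B(f)| = |\E[(V_n-V)f]|\,B(f) \le M\|V_n-V\|_1/(2m)$ uniformly in $f\in\F_w^M$, hence $|\Gamma_{V_n}^2(y)-\Gamma_V^2(y)| \le 4 \cdot M\|V_n-V\|_1/(2m) = (2M/m)\|V_n-V\|_1$ for $|y|=1$, and the general case follows by homogeneity \eqref{app:e:linearity}, giving the claim with $C=2M/m$ depending only on $\E[V]$ and $\vmin$, and $n_0$ chosen so that $\|V_n-V\|_1$ is small enough to guarantee $\E[V_n]\le 2\E[V]$ and that the $V_n$-near-minimizers lie in $\F_w^M$. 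The delicate point throughout — and the one deserving the most care in the writeup — is establishing the uniform two-sided bounds $m\le f\le M$ on the approximate minimizers from the energy bound, which genuinely uses that $\Omega=\T^1$ is compact and one-dimensional; in higher dimension the Sobolev embedding is too weak and this is exactly why the theorem is stated only for $X^{\tor 1}$.
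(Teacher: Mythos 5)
Your overall strategy (a priori bounds on near-minimizers from the variational formula, then the linear estimate $|\E[(V_n-V)f]|\le\|f\|_\infty\|V_n-V\|_1$ times a uniform bound on $B(f)$) is the same as the paper's, and your upper bound $\|f\|_\infty\le M(\E[V])$ is sound: the energy bound plus $\E f=1$ on $\T^1$ gives it, essentially as in the paper (which uses the inverse H\"older inequality to get $\E[|f'|]\le C_1$ and the fundamental theorem of calculus). The genuine gap is the uniform pointwise lower bound $\inf_{\T}f\ge m(\E[V],\vmin)>0$ for near-minimizers, which you yourself flag as the main obstacle and on which your constant $C=2M/m$ and your bound $B(f)\le\E[(2f)^{-1}]\le 1/(2m)$ rest. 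The justification you sketch (``a dip of $\sqrt f$ near $0$ would force $\sqrt f$ to stay small on a macroscopic interval, contradicting $\int f=1$'') fails in general: the energy bound is $\E[|(\sqrt f)'|^2]\lesssim\E[V]$, so $\sqrt f$ may descend from order one to $0$ and back over an interval of length of order $1/\E[V]$, which is perfectly compatible with $\int_\T f=1$ once $\E[V]$ is not small (the other part of the torus compensates). So the claimed lemma is false as argued, not merely delicate; any repair would have to use the blow-up of $B(f)=\E[|y|^2/(2f)]$ when $f$ nearly vanishes, which is a different (and more involved) mechanism.

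More importantly, the lower bound on $f$ is unnecessary, and this is exactly how the paper closes the argument: instead of bounding $B(f_n)$ through $\E[(2f_n)^{-1}]$, it bounds it through near-minimality. From \eqref{eq:GammaV_leq_GammaEV} and \eqref{e:lyapunovExponent_forconstpot} one has $4\E[V_nf_n]B(f_n)\le\Gamma_{V_n}^2(y)+\epsilon_n\le 2C_0$ for the near-minimizers $f_n$, while $\E[V_nf_n]\ge\E[Vf_n]-C_2\epsilon_n\ge\vmin\E[f_n]-C_2\epsilon_n\ge\vmin/2$ for $n\ge n_0$ (using $V\ge\vmin$, $\E f_n=1$ and the sup bound $C_2$ you already have); hence $B(f_n)\le C_0/\vmin$ with no reference to $\inf f_n$. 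With this replacement your final perturbation estimate goes through verbatim, done separately for a near-minimizing sequence of $\Gamma_{V_n}^2$ (lower bound) and of $\Gamma_V^2$ (upper bound), rather than forcing both infima onto one common constrained class, and yields a constant depending only on $\E[V]$ and $\vmin$, together with \eqref{eq:B_lowerbound} and homogeneity \eqref{app:e:linearity} for general $y$. I would also tidy the constants in your reduction (e.g.\ the factor-$2$ near-minimality gives $\E[Vf]\le 2\E[V]$, not $\E[V]/2$), but those are cosmetic; the lower-bound step is the one that must be removed or replaced.
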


\begin{proof}
Without restriction we may assume $|y|=1$, see \eqref{app:e:linearity}. Let $\epsilon_n:=\|V_n-V\|_1$ and choose $n_0$ such that for $n \geq n_0$,
\begin{align*}
\epsilon_n \leq (\vmin/2)(\sqrt{32 \E[V]}+1)^{-1}.
\end{align*}
Note that in particular, $\epsilon_n \leq \E[V]$.

One has by \eqref{eq:GammaV_leq_GammaEV} and \eqref{e:lyapunovExponent_forconstpot} for $n \geq n_0$,
\begin{align}\label{eq:GeneralBound}
\Gamma_{V_n}^2(y) \leq 4 \E[V] =: C_0.
\end{align}
We choose a `minimising' sequence $(f_n)_n \subset \F$ such that for $n \in \N$,
\begin{align}\label{eq:ChoiceOfSequence}
\Gamma_{V_n}^2(y) \geq 4 \E\left[ \frac{|f_n'|^2}{8f_n} + V_nf_n \right]B(f_n) - \epsilon_n.
\end{align}
An application of the `inverse' Hölder inequality \eqref{eq:holder_inverse} with $r=2$ shows
\begin{align}\label{eq:N_Bounded}
\E\left[\frac{|f_n'|^2}{f_n}\right] \geq \E[|f_n'|]^2,
\end{align}
since $\E[f_n]=1$. For $n \geq n_0$, by \eqref{eq:GeneralBound}, \eqref{eq:ChoiceOfSequence}, \eqref{eq:N_Bounded}, \eqref{eq:B_lowerbound}, since $\E[V_nf_n] \geq 0$, 
\begin{align}\label{eq:Nabla_Bound}
C_1:=(8C_0)^{1/2} \geq \E[|f_n'|].
\end{align}
An application of the fundamental theorem of calculus shows for $n \geq n_0$, for all $x< y \in \T^1$,
\begin{align}\label{eq:fundamentaltheoremofcalc}
|f_n(y)-f_n(x)|
& =|\int_x^y f_n'(t)dt|
\leq \int_x^y |f_n'(t)|dt
\leq C_1.
\end{align}
$\E[f_n]=1$, thus, each $f_n$ attains the value $1$. We get for $n$ large, $f_n(x) \leq C_1+1=:C_2$ for all $x \in \T^1$. Therefore, for $n \geq n_0$,
\begin{align}\label{eq:Vnfn-Vfn_leq}
|\E[V_nf_n]-\E[Vf_n]|\leq C_2 \|V_n-V\|_1.
\end{align}
We need an upper bound on $B(f_n)$: By \eqref{eq:GeneralBound}, \eqref{eq:ChoiceOfSequence} and \eqref{eq:Vnfn-Vfn_leq}, for $n \geq n_0$,
\begin{align*}
2C_0 \geq C_0 + \epsilon_n \geq 4\E[V_nf_n]B(f_n)
\geq 4 (\E[Vf_n]-C_2\epsilon_n) B(f_n)
\geq 2\vmin B(f_n).
\end{align*}
This shows that for $n \geq n_0$,
\begin{align}\label{eq:B_bounded}
B(f_n) \leq C_0/\vmin =: C_3.
\end{align}
Finally, by \eqref{eq:ChoiceOfSequence}, \eqref{eq:Vnfn-Vfn_leq}, \eqref{eq:B_bounded}, for $n \geq n_0$
\begin{align*}
\Gamma_{V_n}^2(y) 
& \geq 4 \E\left[ \frac{|f_n'|^2}{8f_n} + Vf_n \right]B(f_n) - 4C_2C_3\epsilon_n - \epsilon_n
\geq \Gamma_V^2(y) - (1+4C_2C_3)\epsilon_n.
\end{align*}

The proof of the upper bound is similar: Choose a minimising sequence $(g_n)_n \subset \F$ such that for $n \in \N$,
\begin{align}\label{eq:ChoiceOfSequence1}
\Gamma_V^2(y) \geq 4 \E\left[ \frac{|g_n'|^2}{8g_n} + Vg_n \right]B(g_n) - \epsilon_n.
\end{align}
As in \eqref{eq:Nabla_Bound} by \eqref{eq:GammaV_leq_GammaEV} and \eqref{e:lyapunovExponent_forconstpot}, `inverse' Hölder inequality, for $n \geq n_0$,
\begin{align*}
\E[|g_n'|] \leq C_1.
\end{align*}
Thus, as in \eqref{eq:fundamentaltheoremofcalc} for $n \geq n_0$, for $x \in \T^1$ one has $g_n(x)\leq C_2$. This shows
\begin{align}\label{eq:Vngn-Vgn_leq}
|\E[V_ng_n] - \E[Vg_n]| \leq C_2\|V_n-V\|_1.
\end{align}
We have similar to \eqref{eq:B_bounded} $2C_0 \geq 4 \vmin{} B(g_n)$, in particular,
\begin{align}\label{eq:B_bounded1}
B(g_n) \leq C_3.
\end{align}
Therefore, by \eqref{eq:Vngn-Vgn_leq}, \eqref{eq:B_bounded1} and \eqref{eq:ChoiceOfSequence1}, for $n \geq n_0$,
\begin{align*}
\Gamma_{V_n}^2(y)
& \leq 4 \E\left[ \frac{|g_n'|^2}{8g_n} + V_ng_n \right]B(g_n) \leq 4 \E\left[ \frac{|g_n'|^2}{8g_n} + Vg_n \right]B(g_n) + 4C_2C_3\epsilon_n\\
& \leq \Gamma_V^2(y) + (1+4C_2C_3)\epsilon_n.
\end{align*}
This shows the statement.
\end{proof}

As we have an $L^1$-Poincar\'e inequality on the $d$-dimensional torus, we can calculate the convergence rate on the torus exactly:

\begin{theorem}\label{prop:continuityonthetorus_speedofconvergence}
Let $X=X^{\tor{d}}$ and $V_n$, $n \in \N$, $V$ be potentials. Assume $nV_n \to V$ for $n \to \infty$ in $L^1$ and $V$ is bounded. Let $c \geq 0$, then for $y \in \R^d$,
\begin{align*}
n(\Gamma_{V_n+c}^2(y)-\Gamma_c^2(y)) \to 2\E[V]|y|^2 \text{ as } n \to \infty.
\end{align*}
\end{theorem}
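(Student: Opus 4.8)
The plan is to sandwich $n\big(\Gamma_{V_n+c}^2(y)-\Gamma_c^2(y)\big)$ between an upper and a lower bound, both tending to $2\E[V]|y|^2$. Throughout write $W_n:=nV_n$, so that $W_n\ge 0$ and $W_n\to V$ in $L^1$, and $N(f):=\E[|\nabla f|^2/(8f)]$ for $f\in\F$. For the upper bound I would simply combine \eqref{eq:GammaV_leq_GammaEV} with \eqref{e:lyapunovExponent_forconstpot}: since $\E[V_n+c]=\E[V_n]+c$, one gets $\Gamma_{V_n+c}^2(y)\le\Gamma_{\E[V_n]+c}^2(y)=2(\E[V_n]+c)|y|^2$, hence $n\big(\Gamma_{V_n+c}^2(y)-\Gamma_c^2(y)\big)\le 2\E[W_n]|y|^2\to 2\E[V]|y|^2$ (alternatively, apply Proposition \ref{prop:application_convergencespeed} to the potential $W_n$).

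The real content is the matching lower bound. First I would discard the $\phi$-infimum crudely: by $B(f)\ge|y|^2/2$ from \eqref{eq:B_lowerbound} together with $N(f),V_n,c\ge0$,
\[
\Gamma_{V_n+c}^2(y)=4\inf_{f\in\F}\big(N(f)+\E[V_nf]+c\big)B(f)\ \ge\ 2|y|^2\Big(c+\inf_{f\in\F}\big(N(f)+\E[V_nf]\big)\Big),
\]
so, as $\Gamma_c^2(y)=2c|y|^2$, the theorem reduces to the estimate $\liminf_{n\to\infty}\,n\inf_{f\in\F}\big(N(f)+\E[V_nf]\big)\ge\E[V]$. This is where the $L^1$-Poincar\'e inequality on $\T^d$ enters. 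Fix $M>\|V\|_\infty$. For $f\in\F$, using $f\ge0$ and $V_n\wedge\tfrac Mn\le\tfrac Mn$,
\[
\E[V_nf]\ \ge\ \E[(V_n\wedge\tfrac Mn)f]\ =\ \tfrac1n\E[W_n\wedge M]-\E[(V_n\wedge\tfrac Mn)(1-f)]\ \ge\ \tfrac1n\E[W_n\wedge M]-\tfrac Mn\,\E[|f-1|].
\]
The Poincar\'e inequality supplies $C_P=C_P(d)<\infty$ with $\E[|f-1|]=\E[|f-\E f|]\le C_P\,\E[|\nabla f|]$, and Cauchy--Schwarz with $\E f=1$ gives $\E[|\nabla f|]\le(8N(f))^{1/2}$. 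Plugging these in and applying $t^2-at\ge-a^2/4$ with $t=\sqrt{N(f)}$, $a=\sqrt{8}\,MC_P/n$ yields, uniformly over $f\in\F$,
\[
N(f)+\E[V_nf]\ \ge\ \tfrac1n\E[W_n\wedge M]-\frac{2M^2C_P^2}{n^2}.
\]
Taking the infimum over $f$ and multiplying by $n$ gives $n\inf_{f\in\F}\big(N(f)+\E[V_nf]\big)\ge\E[W_n\wedge M]-2M^2C_P^2/n$; since $x\mapsto x\wedge M$ is $1$-Lipschitz, $\E[W_n\wedge M]\to\E[V\wedge M]=\E[V]$ (the last step using $M>\|V\|_\infty$), which is the claim. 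Combining the two bounds finishes the proof, the case $y=0$ being trivial.

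The main --- essentially the only --- obstacle is having a Poincar\'e-type inequality at one's disposal, which is precisely what confines the statement to the torus. The truncation of $W_n$ at height $M$ is forced because $L^1$-convergence $W_n\to V$ gives no $L^p$-control for $p>1$, whereas it does pass through the $1$-Lipschitz map $x\mapsto x\wedge M$, and $\E[V\wedge M]=\E[V]$ by boundedness of $V$.
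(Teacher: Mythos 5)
Your proof is correct, and for the essential part (the lower bound) it takes a genuinely different route from the paper. Both arguments share the upper bound (via \eqref{eq:GammaV_leq_GammaEV} and \eqref{e:lyapunovExponent_forconstpot}) and the two key ingredients for the lower bound, namely the $L^1$-Poincar\'e inequality on the torus and the bound $\E[|\nabla f|]^2 \leq \E[|\nabla f|^2/f]$ (your Cauchy--Schwarz step is the paper's ``inverse'' H\"older inequality \eqref{eq:holder_inverse}). But the paper argues along a near-minimising sequence $(f_n)_n$: it first shows $\E[|\nabla f_n|^2/(8f_n)]\to 0$, deduces $\|f_n-1\|_1\to 0$ via Poincar\'e, invokes uniform integrability to truncate $f_n$ at the $n$-dependent level $M_n=\|nV_n-V\|_1^{-1/2}$, and separately needs an upper bound on $B(f_n)$ before combining with \eqref{e:c+Vgeq} and \eqref{eq:B_lowerbound}. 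You instead discard $B(f)\geq |y|^2/2$ at the outset and prove a bound that is uniform in $f\in\F$, namely $N(f)+\E[V_nf]\geq \tfrac1n\E[(nV_n)\wedge M]-2M^2C_P^2/n^2$, by truncating the potential (not the test function) at a fixed level $M>\|V\|_\infty$ and completing the square in $\sqrt{N(f)}$. This avoids the minimising sequence, the uniform-integrability step and the control of $B(f_n)$ entirely, gives an explicit error rate $O(1/n)$ in the lower bound, and would in fact extend beyond bounded $V$ (letting $M\to\infty$ after $n\to\infty$, using monotone convergence), whereas the paper's $\epsilon_{2,n}$ uses $\|V\|_\infty$ directly. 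The paper's route, on the other hand, is the one that generalises to the $d=1$ stability estimate of Theorem \ref{prop:continuityonthetorus}, where the sup-bound on the minimisers obtained from the fundamental theorem of calculus plays the role that your uniform truncation cannot.
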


\begin{proof}
Let $y\neq 0$. The upper bound follows from \eqref{eq:GammaV_leq_GammaEV}, \eqref{e:lyapunovExponent_forconstpot}. For the lower let $n_0 \in \N$ such that for $n \geq n_0$ one has $\E[nV_n] \leq 2\E[V]$. By \eqref{eq:GammaV_leq_GammaEV}, \eqref{e:lyapunovExponent_forconstpot} for $n \geq n_0$,
\begin{align}\label{eq:nGammaVn_Bounded}
\psi_n:=n\Gamma_{V_n}^2(y) \leq 2\E[nV_n]|y|^2 \leq 4 \E[V]|y|^2=:C_0.
\end{align}
Choose $(f_n)_n \subset \F$ such that
\begin{align}\label{eq:ChoiceOfSequence2}
\psi_n
& = 4 n \inf_{f \in \F} \E\left[\frac{|\nabla f|^2}{8f} + Vf\right]B(f)
\geq 4 n \E\left[\frac{|\nabla f_n|^2}{8f_n} + Vf_n\right]B(f_n)-1/n.
\end{align}
Therefore, with  \eqref{eq:nGammaVn_Bounded} and \eqref{eq:B_lowerbound}, for $n \geq n_0$,
$
(C_0+1/n)/n \geq \psi_n/n \geq 2\E[|\nabla f_n|^2/(8f_n)]|y|^2,
$
which shows
\begin{align}\label{eq:N_ToZero}
\E\left[\frac{|\nabla f_n|^2}{8f_n} \right] \to 0 \text{ as } n \to \infty.
\end{align}
Using `inverse' Hölder inequality \eqref{eq:holder_inverse} and Poincar\'e inequality, see \cite[(7.45)]{Gilbarg83}, we get for $n \in \N$,
\begin{align*}
\E\left[\frac{|\nabla f_n|^2}{8f_n} \right] 
\geq \E[|\nabla f_n|]^2/8
\geq c_p\E[|f_n-1|]^2/8.
\end{align*}
where the constant $c_p$ comes from the Poincar\'e inequality. Thus, by \eqref{eq:N_ToZero}
\begin{align}\label{eq:f_n_to_zero}
\|f_n -1\|_1 \to 0 \text{ as } n \to \infty.
\end{align}
In particular, the set $\{f_n: \, n \in \N\}$ is uniformly integrable, see \cite[Theorem 4.5.2]{Durrett96Probability}, and we get for $M_n:=\|nV_n-V\|_1^{-1/2}$ that
\begin{align}\label{eq:fn,fn_geq_Mn}
\epsilon_{1,n} := \E[f_n,f_n \geq M_n]  \to 0 \text{ as } n \to \infty.
\end{align}
We may estimate for $n \in \N$,
\begin{align}\label{eq:EnV_n_geq}
\E[nV_nf_n]
&\geq \E[V(f_n\wedge M_n)] - \|nV_n-V\|_1M_n\nonumber\\
&\geq \E[Vf_n] - \|V\|_\infty \E[|f_n - f_n\wedge M_n|] - \|nV_n-V\|_1^{1/2}\nonumber\\
&= \E[Vf_n] - \|V\|_\infty\epsilon_{1,n} - \|nV_n-V\|_1^{1/2}
\geq \E[V] - \epsilon_{2,n},
\end{align}
with $\epsilon_{2,n}:=\|V\|_\infty(\|f_n-1\|_1 + \epsilon_{1,n}) + \|nV_n-V\|_1^{1/2}$. Note that by \eqref{eq:f_n_to_zero}, \eqref{eq:fn,fn_geq_Mn} and by assumptions on $V$ one has $\epsilon_{2,n} \to 0$ as $n \to \infty$. We need control of $B(f_n)$: Let $n_1 \geq n_0$ such that for $n \geq n_1$ one has $1/n \leq C_0$ and $\epsilon_{2,n} \leq \E[V]/2$. Then with \eqref{eq:nGammaVn_Bounded}, \eqref{eq:ChoiceOfSequence2}, \eqref{eq:EnV_n_geq} for $n \geq n_1$
\begin{align*}
2C_0\geq C_0 + 1/n\geq 4 \E[nV_nf_n]B(f_n)
\geq 4(\E[V]-\epsilon_{2,n})B(f_n)
\geq 2\E[V]B(f_n).
\end{align*}
This shows for $n \geq n_1$,
\begin{align}\label{eq:B_bounded2}
B(f_n) \leq C_0/\E[V]=: C_1.
\end{align}
Therefore, by \eqref{e:c+Vgeq}, \eqref{eq:ChoiceOfSequence2}, \eqref{eq:EnV_n_geq}, \eqref{eq:B_bounded2} and \eqref{eq:B_lowerbound}, for $n \geq n_1$,
\begin{align*}
n(\Gamma_{c+V_n}^2(y)-\Gamma_c^2(y))
& \geq n\Gamma_{V_n}^2(y)
\geq 4 n \E\left[\frac{|\nabla f_n|^2}{8f_n} + V_nf_n\right]B(f_n)-1/n\\
& \geq 4 (\E[V]-\epsilon_{2,n})B(f_n)-1/n
 \geq 4 \E[V]B(f_n)- 4C_1\epsilon_{2,n} -1/n\\
& \geq 2\E[V]|y|^2- 4C_1\epsilon_{2,n} -1/n.
\end{align*}
This finishes the argument.
\end{proof}

\section{Examples}\label{sec:counterex}

The Lyapunov exponent is semi-continuous in many cases as outlined in Section \ref{sec:continuity}. We provide examples which show that continuity of the Lyapunov exponent with respect to weak convergence of the underlying measure, continuity with respect to $L^p$ convergence of the potential, $1 \leq p < \infty$, and also a convergence speed as the one established in Proposition \ref{prop:application_convergencespeed} are not valid in general. This should be compared to similar models such as random walks in random potential, and we refer to the discussion given in Section \ref{sec:continuity}.

The example we present is built on homogeneous Poisson line processes. In particular, the underlying probability measure is isotropic, whereas it does not satisfy a `finite range dependence property'.
We start recalling Poisson line processes and refer to \cite[Section 15.3]{Daley2008}, \cite[Chapter 8]{Stoyan1987} for more detailed descriptions.

\subsection{The Poisson Line Process}

Let $e_1$ and $e_2$ denote the unit vectors in $\R^2$. Any (undirected) line $\ell$ in $\R^2$ can be represented by its angle $\theta$ with a reference line and its (signed) distance $r$ to a reference point. We choose as reference line the $x_1$-axis and as reference point the origin. The angle is measured starting from the $x_1$-axis anticlockwise. The distance $r$ is chosen to be nonnegative if $\ell$ intersects $\{te_2: \; t\geq 0\}$ or if $\ell$ is parallel to $e_2$ intersecting $\{te_1:\; t >0\}$. Else, $r$ is chosen negative. This leads to a bijective correspondence $\rho: \LL \to \rep$ between the set $\LL$ of lines in $\R^2$ and and the `representation space' $\rep:=\R \times (0,\pi]$.  If $\ell = \rho^{-1}(r, \theta)$ we also simply write $\ell = (r,\theta)$. Let $\BB(\rep)$ denote the Borel $\sigma$-algebra on $\rep$.

Let $\Omega$ be the set of locally finite measures on $(\rep, \BB(\rep))$ equipped with the topology of vague convergence and let $\FF$ be the Borel $\sigma$-algebra on $\Omega$. We introduce an action of $\R^2$ on $\Omega$ in the following way: $(\R^2,+)$ is acting on $\LL$ via $\tau_x^\LL:$ $\ell\mapsto \ell + x$, where $x \in \R^2$, $\ell \subset \LL$. This induces an action of $(\R^2,+)$ on $\rep$ given by $\tau_x^\rep:(r,\theta) \mapsto \rho( \tau^\LL_x(\rho^{-1}(r,\theta)))$, where $(r,\theta)\in \rep$, $x \in \R^2$. Finally we introduce the action of $(\R^2,+)$ on $\Omega$ as $\tau_x: \Omega \to \Omega$, $\tau_x \omega[A] := \omega[\tau_x^\rep A]$, where $A \in \BB(\rep)$, $\omega \in \Omega$ and $x \in \R^2$. Note that the action $\tau^\rep$ is no simple shift on the cylinder, but a shear, see \cite[(8.2.1)]{Stoyan1987} or \cite[(15.3.1)]{Daley2008} where formulae for directed lines are given. 
The continuity properties of $\tau^\rep_\cdot \cdot$ obtained from such formulae ensure that $\tau$ is product measurable analogous to \cite[Exercise 12.1.1(a)]{Daley2008}.

The (homogeneous) Poisson line process is given by the representation $\rho$ and the distribution $\P_\kappa$ of a Poisson point process on $(\rep, \BB(\rep))$ having intensity measure $\nu=\kappa\cdot\leb\otimes\mu$ with $\mu$ the uniform distribution on $(0,\pi]$ and $\kappa>0$. The tupel $(\Omega,\FF,\P_\kappa,\tau)$ is an ergodic dynamical system, as outlined for example in \cite[p.\ 99]{Cowan1980} and \cite[Theorem 1]{Miles1964}. Moreover, it is isotropic, see e.g.\ \cite[p.\ 902]{Miles1964}.

\subsection{Discontinuity with Respect to the Underlying Measure}\label{subsec:counterx:continuity_weak_prob}

Some additional notation is needed: Let $R>0$, $x \in \R^2$, and let $\ell$ be a line in $\R^2$. We denote by $B_R(x)$ the closed ball with centre $x$ and radius $R$, and we introduce stripes $Q_R(\ell)$ given by 
\begin{align*}
Q_R(\ell):=\{y \in \R^2:\; d(y,\ell) < R \}.
\end{align*}
By $H_R(x)$ we denote the entrance time of $Z$ into $B_R(x)$, that is $H_R(x):= \inf\{t \geq 0: \; Z_t \in B_R(x)\}$. We introduce the exit time of $Z$ from $Q_R(\ell)$ by $\tau_R(\ell):= \inf\{t \geq 0: \; Z_t \notin Q_R(\ell)\}$. Then $H_R(x)$ and $\tau_R(\ell)$ are stopping times with respect to the canonical filtration of $(Z_t)_t$, see e.g.\ \cite[Problem 1.2.7]{Karatzas1991}.

The potential we consider is defined as follows: For $\omega \in \Omega$ let $[\omega]$ be the support of $\omega$. If $F$ is a subset of $\R^2$ and $\omega \in \Omega$ we introduce the intersection of $F$ with the lines of $\omega$ by  $[\omega] \cap F := \bigcup_{z \in [\omega]} (F \cap \rho^{-1}(z))$. Let $\constpot, M \geq 0$ and $R>0$. We define the potential $V:\Omega \to [0,\infty)$,
\begin{align}\label{eq:def_pot}
V(\omega):=V_{\constpot,R,M}(\omega):= \constpot + M\cdot 1_{\{\tilde \omega \in \Omega:\; [\tilde \omega] \cap B_R(0) = \emptyset\}}(\omega),
\end{align}
which equals $M+\constpot$ outside of stripes of radius $R$ along the lines of $\omega$, and which equals $\constpot$ inside these stripes.

We show the following: Let $\lambda_2$ be the principal Dirichlet eigenvalue of $-(1/2)\Laplace$ in the unit ball in $\R^2$.

\begin{theorem}\label{th:counterex}
For any $D>0$ there is $R_0>0$ such that for $\kappa>0$ one has $\P_\kappa$-a.s.\ for all $\constpot \geq 0$ and for all $y \in S^{d-1}$,
\begin{align}\label{eq:counterex}
\sup_{R \geq R_0} \sup_{M \geq 0} \limsup_{u \to \infty}
- \frac{1}{u}\ln E_0[\exp\{-\int_0^{H_1(uy)}  (V_{\constpot, R,M})_\omega(Z_s)ds\}] 
\leq \sqrt{2\constpot}+ D.
\end{align}
We may choose $R_0=4\sqrt{\lambda_2}/D+1$.
\end{theorem}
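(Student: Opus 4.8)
The plan is, for $\P_\kappa$-a.e.\ $\omega$, to bound the expectation in \eqref{eq:counterex} from below by restricting to an explicit family of Brownian paths running from $0$ to $B_1(uy)$ essentially as in constant potential $\constpot$. Write $S_R(\omega):=\bigcup_{\ell\in[\omega]}Q_R(\ell)$; by \eqref{eq:def_pot} one has $(V_{\constpot,R,M})_\omega\equiv\constpot$ on $S_R(\omega)$ and $\equiv\constpot+M$ off it. I would use two elementary geometric facts: since the angles of the lines of a Poisson line process are a.s.\ pairwise distinct, a.s.\ any two lines of $\omega$ meet, so $S_R(\omega)$ is path-connected for every $R>0$; and any ball $B_{R-1}(x)$ with centre $x$ on a line of $\omega$ is contained in $S_R(\omega)$. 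The idea is then to join $0$ to $B_1(uy)$ through an overlapping chain of such balls $B_{R-1}(c_i)$, with centres on lines of $\omega$, whose number is $(1+o(1))\,u/(R-1)$ as $u\to\infty$: the Brownian motion then sees potential $\constpot$ throughout $S_R(\omega)$, and the cost of crossing a ball of radius $R-1$ is controlled by the principal Dirichlet eigenvalue $\lambda_2/(R-1)^2$ of $-\tfrac12\Laplace$ on $B_{R-1}$.

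First I would carry out the geometric step: for $\P_\kappa$-a.e.\ $\omega$, for every $y\in S^{d-1}$ and every sufficiently large $u$, there is a finite chain of lines $\ell^{(0)},\dots,\ell^{(N)}$ of $\omega$, consecutive ones crossing at points $q_1,\dots,q_N$, with $\ell^{(0)}$ and $\ell^{(N)}$ attaining $d(0,[\omega])$ and $d(uy,[\omega])$, such that the piecewise-linear spine through the feet of $0$, the $q_j$, and $uy$ (each segment on some $\ell^{(i)}$) has length $u+o(u)$. I would build this greedily, at each stage using only lines whose direction differs from $y$ by at most $\delta_u:=u^{-1/3}$; near the segment $[0,uy]$ these form, with overwhelming probability, a family dense enough to be chained while keeping the spine within $o(u)$ of $[0,uy]$ and at relative length overhead $O(\delta_u^2)=o(1)$. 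Passing from "a.e.\ $\omega$, fixed large $u$, $y$" to "a.e.\ $\omega$, all large $u$, all $y$" is done by Borel--Cantelli, after discretising $u\in\N$ and $S^{d-1}$ on scale $\asymp 1/u$ and using that $\P_\kappa(d(uy,[\omega])>t)$ decays exponentially in $t$ (which also yields $\sup_{0\le t\le u}d(ty,[\omega])=O(\log u)$ along each ray). Finally I would place centres $c_0,\dots,c_N$ along the spine at spacing $R-1$, with $c_0$ the foot of $0$ on $\ell^{(0)}$ and $c_N$ the foot of $uy$ on $\ell^{(N)}$, so each $B_{R-1}(c_i)\subset S_R(\omega)$ and consecutive balls overlap in a lens.

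Next, fixing $M$, I would split the path at the first entrance into $Q_R(\ell^{(0)})$ and the last exit from $B_{R-1}(c_N)$, so that by the strong Markov property $E_0[\exp\{-\int_0^{H_1(uy)}(V_{\constpot,R,M})_\omega(Z_s)\,ds\}]$ is bounded below by a product of three factors: (a) reaching $Q_R(\ell^{(0)})$ from $0$, a bounded distance, hence $\ge c_\ast e^{-(\constpot+M)d(0,[\omega])^2}$; (c) reaching $B_1(uy)$ from $B_{R-1}(c_N)$, a distance $O(\log u)$, hence $\ge c_\ast e^{-(\constpot+M)\,O((\log u)^2)}$; and (b) the passage through the ball chain in $S_R(\omega)$. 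For each fixed $M$ the contributions of (a) and (c) to $-\tfrac1u\ln(\cdot)$ vanish as $u\to\infty$. For (b) I would chain, over the $N=(1+o(1))u/(R-1)$ balls, the elementary bound
\[
P_{c_i}\big(Z\text{ reaches }B_{R-1}(c_i)\cap B_{R-1}(c_{i+1})\text{ by time }t,\ Z_{[0,t]}\subset B_{R-1}(c_i)\big)\,e^{-\constpot t}\ \ge\ c\,e^{-(R-1)^2/(2t)-(\constpot+\lambda_2/(R-1)^2)t},
\]
optimising $t$ per ball to get a factor $\ge c\,e^{-(R-1)\sqrt{2(\constpot+\lambda_2/(R-1)^2)}}$, hence (b)$\,\ge C(\omega)\,e^{-(u+o(u))\sqrt{2(\constpot+\lambda_2/(R-1)^2)}}$ (the precise geometric constants being immaterial in view of the slack in $R_0$). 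Combining, for every fixed $M$,
\[
\limsup_{u\to\infty}-\tfrac1u\ln E_0\Big[\exp\{-\textstyle\int_0^{H_1(uy)}(V_{\constpot,R,M})_\omega(Z_s)\,ds\}\Big]\ \le\ \sqrt{2\big(\constpot+\lambda_2/(R-1)^2\big)}\ \le\ \sqrt{2\constpot}+\frac{\sqrt{2\lambda_2}}{R-1},
\]
by $\sqrt{a+b}\le\sqrt a+\sqrt b$. This bound is independent of $M$ and of $y$ and nonincreasing in $R$; for $R\ge R_0=1+4\sqrt{\lambda_2}/D$ it is at most $\sqrt{2\constpot}+4\sqrt{\lambda_2}/(R_0-1)=\sqrt{2\constpot}+D$, and taking $\sup_{M\ge0}$ then $\sup_{R\ge R_0}$ gives \eqref{eq:counterex}; the exceptional $\omega$-set depends on neither $\constpot$ (which enters only the arithmetic) nor $y$.

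The main obstacle is the geometric step: producing an \emph{asymptotically straight} corridor in $S_R(\omega)$. A corridor with a fixed multiplicative length overhead $1+\eta$, $\eta>0$ — which is all that routing straight from $0$ to $uy$ and detouring around the "cores" $S_R(\omega)^c$ would give, since those have strictly positive density along every ray — would only yield a bound $(1+\eta)\sqrt{2\constpot}+\cdots$, exceeding $\sqrt{2\constpot}+D$ for large $\constpot$. Hence it is essential to use lines whose direction lies in a window $\delta_u$ of $y$ that shrinks with $u$, and to organise the Borel--Cantelli estimates so that the construction succeeds simultaneously for all large $u$ and all directions.
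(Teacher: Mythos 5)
Your overall mechanism is the right one (route the Brownian motion through the low\--potential stripes along Poisson lines nearly aligned with $y$, pay $\sqrt{2(\constpot+\lambda_2/R^2)}$ per unit length there via a Girsanov/eigenvalue estimate, and treat the high\--potential stretches separately, exploiting that $\sup_{M}$ sits outside the $\limsup$), but the step you lean on hardest is precisely the one you do not prove. The claim that $\P_\kappa$\--a.s., simultaneously for all large $u$ and all $y\in S^{d-1}$, the lines with direction within $\delta_u=u^{-1/3}$ of $y$ can be greedily chained into a connected spine of length $u+o(u)$ staying within $o(u)$ of $[0,uy]$ is a nontrivial geometric statement of the Aldous--Kendall ``Poissonian city'' type; your Borel--Cantelli sketch controls $d(uy,[\omega])$ but not the existence, connectivity within the window, and straightness of the chain (nearly parallel lines at typical offset $\asymp 1/(\kappa\delta_u)$ intersect only at distance $\asymp u^{2/3}$, so whether a greedy chain stays near the segment needs a real argument). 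In addition, your per\--ball chaining bound is defective as written: with centres spaced $R-1$ apart one has $c_i\in B_{R-1}(c_i)\cap B_{R-1}(c_{i+1})$, so ``reaching the lens'' is trivial and guarantees no progress, and a correct chaining must be uniform over starting points in the previous lens and must deliver cost per unit spine length $(1+o(1))\sqrt{2(\constpot+\lambda_2/(R-1)^2)}$ --- any constant\--factor loss multiplies $\sqrt{2\constpot}$ and, as you yourself observe about the $(1+\eta)$ overhead, ruins \eqref{eq:counterex} for large $\constpot$. The clean tool is the stripe estimate of Lemma \ref{lem:costsalongline} (Girsanov along a straight segment, confinement in $Q_R$) applied segmentwise, not ball\--to\--ball chaining.

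Two remarks that close or avoid the gap. First, within your own scheme the chain is unnecessary: a single line with direction within $\delta_u$ of $y$ and offset $O(u^{1/3+\epsilon})$ exists with probability $1-e^{-c\kappa u^{\epsilon}}$, its stripe already ends within $O(u^{2/3+\epsilon})=o(u)$ of $uy$, and for each fixed $M$ the two high\--potential dashes of length $o(u)$ contribute nothing to the $\limsup$; only the Borel--Cantelli over $u\in\N$ and a net of directions must then be carried out. Second, the paper avoids even this by taking a single line with a \emph{fixed} angle tolerance $\varphi$ depending on $M$ (and $\constpot,D$): the a.s.\ event is an intersection over countably many tolerances and a finite net of directions, one application of Lemma \ref{lem:costsalongline} covers the whole stripe of length $\le u/\cos\varphi$, and the final dash, though of length $\asymp u\tan\varphi$, costs at most $uD/2$ because $\varphi\le\arctan\bigl(D/(16\alpha_{\constpot+M}(e_1))\bigr)$; this yields $\sqrt{2(\constpot+\lambda_2/(R_0-1)^2)}/\cos\varphi+8(\tan\varphi)\alpha_{\constpot+M}(e_1)\le\sqrt{2\constpot}+D$ with no Borel--Cantelli in $u$ at all. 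As it stands, your proposal has a genuine gap at the corridor construction and a flawed chaining estimate; both are repairable, but the repair essentially amounts to one of these two simplifications.
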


Recall, that by \eqref{e:lyapunovExponent_forconstpot} the right side of \eqref{eq:counterex} equals $\alpha_\constpot(y)+D$.

This result contradicts continuity of the Lyapunov exponent with respect to weak convergence of the underlying probability measure: The convolution with an even and smooth function $g:\R^2 \to [0,\infty)$ of support $\supp g \subset B_{R/2}(0)$ and $\int_{\R^2} g(x) dx=1$, see e.g.\ \cite[Lemma 4.4]{Ruess2013}, leads to a \textit{regular potential} $W:=V_{1,2R,1}\ast g \leq V_{1,R,1}$ for which the Lyapunov exponent exists and can be expressed as follows: $\P_\kappa$-a.s.\ the limit in the following exists and equals
\begin{align*}
\lim_{u \to \infty} - \frac{1}{u}\ln E_0[\exp\{-\int_0^{H_1(ue_1)} W_\omega(Z_s)ds\}] = \alpha_{W}^{\P_\kappa}(e_1),
\end{align*}
see \cite[(1.9)]{Ruess2013}. With Theorem \ref{th:counterex} for $D= (\alpha_2(e_1)-\alpha_1(e_1))/2$ there is $R>0$ such that
\begin{align}\label{eq:contradiction}
\sup_{\kappa>0} \alpha_W^{\P_\kappa}(e_1)
\leq \sup_{\kappa>0} \alpha_{V_{1,R,1}}^{\P_\kappa}(e_1)
\leq \alpha_1(e_1)+D < \alpha_2(e_1) =\alpha_W^{\delta_\textbf{0}}(e_1),
\end{align}
where $\textbf{0}$ is the zero measure on $\rep$. On the other hand note that $\P_{\kappa} \to \delta_\textbf{0}$ weakly as $\kappa \to 0$. Such convergence follows e.g.\ with help of Laplace transforms of Point processes, see \cite[(9.4.17), Theorem 11.1.VIII]{Daley2008}. This together with \eqref{eq:contradiction} shows discontinuity as stated.


We start with an estimate on the travel costs along stripes which is analogous to \cite[(5.2.32)]{Sznitman98}. Let $\ell_0$ denote the $x_1$-axis.

\begin{lemma}\label{lem:costsalongline}
Let $R>0$, $\constpot \geq 0$ and $u > R$. Then
\begin{align*}
&E_0[\exp\{-\constpot H_R(ue_1)\}, \tau_R(\ell_0)>H_R(ue_1)] 
\geq  C \exp\{-u\sqrt{2(\constpot+\lambda_2/R^2)}\},
\end{align*}
where $C>0$ is a constant.
\end{lemma}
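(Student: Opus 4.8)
The plan is to estimate the Brownian-bridge-type expectation by forcing the path to travel along the tube $Q_R(\ell_0)$ in a controlled way, and to reduce the two-dimensional problem to a one-dimensional heat-kernel estimate in the transverse direction times a ballistic drift in the $x_1$-direction. First I would replace the entrance time $H_R(ue_1)$ into the ball $B_R(ue_1)$ by the (smaller or comparable) first time the first coordinate $Z^{(1)}$ reaches $u$; since inside $Q_R(\ell_0)$ a point with first coordinate $\geq u$ that stays within distance $R$ of $\ell_0$ lies in $B_R(ue_1)$ only once it is near $ue_1$, it is cleaner to instead lower-bound the event by insisting that the path stays in the thin tube $\{|x_2| < R\}$ until $Z^{(1)}$ first hits $u$ and that at that moment $|Z^{(2)}|$ is small, so that $B_R(ue_1)$ has indeed been entered. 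On that event $\tau_R(\ell_0) > H_R(ue_1)$ automatically, and $H_R(ue_1) \leq T_u := \inf\{t: Z^{(1)}_t = u\}$.

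Next I would use independence of the two coordinates of planar Brownian motion: $Z^{(1)}$ is a one-dimensional Brownian motion and $Z^{(2)}$ an independent one. On the event described, $\exp\{-\constpot H_R(ue_1)\} \geq \exp\{-\constpot T_u\}$ when $\constpot \geq 0$ fails in the wrong direction — so instead I bound $H_R(ue_1)$ from above by $T_u$ is not what I want; rather I want a lower bound on the expectation, so I restrict further to the event $\{T_u \leq$ something$\}$ is also the wrong direction. The correct route: write
\begin{align*}
E_0[\exp\{-\constpot H_R(ue_1)\},\, \tau_R(\ell_0) > H_R(ue_1)]
\geq E_0\bigl[\exp\{-\constpot T_u\}\, 1_{A}\bigr],
\end{align*}
where $A$ is the event that $Z^{(2)}$ stays in $(-R,R)$ on $[0,T_u]$ and $Z^{(1)}$ does not overshoot, using $H_R(ue_1) \leq T_u$ on $A$ together with a small-ball requirement on $Z^{(2)}_{T_u}$ to ensure $B_R(ue_1)$ is hit. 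Conditioning on the path of $Z^{(1)}$ (hence on $T_u$) and using independence, the probability that $Z^{(2)}$ remains in $(-R,R)$ up to a fixed time $t$ is bounded below by $c\, e^{-\lambda_2 t / R^2}$ by the standard eigenfunction lower bound for the killed heat semigroup on an interval of half-width $R$ (the principal Dirichlet eigenvalue of $-\tfrac12\Delta$ on $(-R,R)$ is $\lambda_2/R^2$ with $\lambda_2 = \pi^2/8$, matching the disc normalization used here up to the constant absorbed into $C$). This gives
\begin{align*}
E_0[\exp\{-\constpot H_R(ue_1)\},\, \tau_R(\ell_0) > H_R(ue_1)]
\geq c\, E_0\bigl[\exp\{-(\constpot + \lambda_2/R^2)\, T_u\}\bigr] \cdot (\text{small-ball factor}),
\end{align*}
and $E_0[\exp\{-\beta T_u\}] = \exp\{-u\sqrt{2\beta}\}$ for the hitting time of level $u$ by a standard one-dimensional Brownian motion started at $0$, which is exactly the claimed exponential rate with $\beta = \constpot + \lambda_2/R^2$.

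The main obstacle is the bookkeeping that makes the reduction honest: one must choose the transverse confinement strictly inside $Q_R(\ell_0)$ (say half-width $R - \delta$) so that the tube exit time genuinely exceeds $H_R(ue_1)$, while keeping the eigenvalue $\lambda_2/(R-\delta)^2$ close enough to $\lambda_2/R^2$ — this is harmless since the difference is absorbed into the constant $C$ and one may in fact use the slightly larger exponent and then note the statement only claims $\geq$, or alternatively one observes that the displayed bound already has $\lambda_2/R^2$ and the genuine rate for half-width $R$ is exactly that. A second small point is ensuring that when $Z^{(1)}$ first hits $u$ the transverse coordinate is small enough that $ue_1$ is within distance $R$ — this costs only a fixed positive probability factor (a Brownian motion confined to $(-R,R)$ has a uniformly positive chance of being in $(-R/2,R/2)$ at a stopping time of the independent coordinate), again absorbed into $C$. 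Everything else is the exact identity for the Laplace transform of a one-dimensional hitting time and the classical spectral lower bound for confinement probabilities; no delicate estimates beyond these are needed.
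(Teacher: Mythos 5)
Your plan is correct, but it is a genuinely different argument from the one in the paper. You split the planar Brownian motion into its two independent coordinates, confine the transverse coordinate $Z^{(2)}$ to $(-R,R)$ up to the first passage time $T_u$ of $Z^{(1)}$ to level $u$, and then use the exact Laplace transform $E_0[\exp\{-\beta T_u\}]=\exp\{-u\sqrt{2\beta}\}$; note that confinement alone already forces $Z_{T_u}=(u,Z^{(2)}_{T_u})\in B_R(ue_1)$ and $\tau_R(\ell_0)>H_R(ue_1)$, so your extra ``small-ball'' requirement at time $T_u$ is superfluous. The paper instead fixes a deterministic time $t$, forces the path to stay within distance $R$ of the point moving at constant speed along $\ell_0$ (which yields both the tube confinement and $H_R(ue_1)\leq t$), rewrites that probability by Girsanov's formula, removes the linear term via the symmetry $E_0[Z_t\mid \sup_{s\leq t}|Z_s|<R]=0$ and Jensen's inequality, invokes the two-dimensional confinement bound $P_0[\sup_{s\leq t}|Z_s|<R]\geq C\exp\{-\lambda_2 t/R^2\}$ from \cite[(3.1.53)]{Sznitman98}, and finally optimises over $t$ with $t=u/\sqrt{2(\constpot+\lambda_2/R^2)}$. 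Your route avoids Girsanov and the optimisation (the Laplace transform of $T_u$ does that implicitly), at the price of exploiting the special geometry that $\ell_0$ is a coordinate axis (harmless here, by rotation invariance), whereas the paper's change-of-measure argument is the one that parallels \cite[(5.2.32)]{Sznitman98} and carries over verbatim to balls and arbitrary directions. One bookkeeping slip: the paper's $\lambda_2$ is the principal Dirichlet eigenvalue of $-\tfrac12\Laplace$ in the planar unit ball, not $\pi^2/8$; your one-dimensional interval eigenvalue is strictly smaller, so your exponent $\sqrt{2(\constpot+\pi^2/(8R^2))}$ is in fact better and implies the stated bound --- but the discrepancy sits in the exponent, not in the multiplicative constant $C$, so it cannot be ``absorbed into $C$'' as you phrase it; you should simply say the stated inequality follows a fortiori. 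With that correction, and dropping the unnecessary $R-\delta$ and small-ball provisos, your argument is complete.
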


\begin{proof}
For any $t > 0$, with Girsanov's formula, see \cite[Theorem 3.5.1, Corollary 3.5.13]{Karatzas1991},
\begin{align}
&E_0[\exp\{-\constpot H_R(ue_1)\}, \tau_R(\ell_0)>H_R(ue_1)]
\geq \exp\{-\constpot t\} P_0[\sup_{0\leq s \leq t}|Z_s-\frac{s}{t}ue_1|< R]\nonumber\\
& \quad = \exp\{-\constpot t\} E_0[\exp\{-\frac{u}{t}e_1\cdot Z_t - \frac{u^2}{2t}\}, \sup_{0\leq s \leq t}|Z_s|< R].\label{eq:linetravelcosts}
\end{align}
We abbreviate $\BB:=\{\sup_{0\leq s \leq t}|Z_s|< R\}$. Note that $E_0[Z_t|\BB]=0$, since $-Z \overset{d}{=} Z$ and $Z \in \BB$ if and only if $-Z \in \BB$. An application of Jensen inequality shows, that \eqref{eq:linetravelcosts} is greater or equal
\begin{align*}
&\exp\{-\constpot t\} \exp\{-\frac{u}{t}e_1\cdot E_0[Z_t|\BB] - \frac{u^2}{2t}\} P_0[ \BB]\\
&= \exp\{-\constpot t -\frac{u^2}{2t}\} P_0[ \BB]
\geq C \exp\{-\constpot t-\frac{u^2}{2t}-\lambda_2t/R^2\},
\end{align*}
where for the last estimate we used \cite[(3.1.53)]{Sznitman98}. The choice $t:=u/\sqrt{2(\constpot + \lambda_2/R^2)}$ shows the statement.
\end{proof}

In order to prove Theorem \ref{th:counterex} we need to construct a path such that travelling along this path is relatively cheap for the Brownian motion. Therefore, a great part of this path should lie in regions of low potential. This forces the path to follow closely the lines of the Poisson line process. On the other hand the path should not be too long. A path following mainly the lines and `exceeding' the Euclidean distance only logarithmically can be found in \cite{Aldous2008, Kendall2011}. For our purposes it suffices to find a path with linear `exceedance'.

\begin{proof}[Proof of Theorem \ref{th:counterex}]
For any direction $y \in S^{d-1}$ we need to have a `suitable' line leading into this direction: Let $\imaginary$ denote the complex number $(0,1)\in\C$. For $y \in S^{d-1}$, for $\psi \in (0,\pi/2)$ we introduce the event $\AAa(y,\psi) \in \FF$ consisting of those $\omega \in \Omega$ for which there is a line $\ell = (r,\theta) \in [\omega]$ such that the angle between $\ell$ and $y$, measured from $y$ anticlockwise, is in $[\pi-\psi,\pi)$, and $\ell \cap \{ e^{t\imaginary},t>0\} \neq \emptyset$. Set $\Omega_1:=\bigcap_{0<\psi<\pi/2} \bigcap_{y \in S^{d-1}} \AAa(y,\psi)$.

For all $\kappa>0$ one has $\P_\kappa[\Omega_1]=1$. In fact, for any $y \in S^{d-1}$ and $\psi \in (0,\pi/2)$ one has $\P_\kappa[\AAa(y,\psi)] = 1$. This can be verified by considering the distribution of the intersection points and angles of the lines $\ell \in \rho^{-1}[\omega]$ with a fixed line, see e.g.\ \cite[Theorem 2]{Miles1964}. Recognise, that for $\psi \in (0,\pi/2)$ and for $0 \leq t \leq \psi/4$ one has $\AAa(ye^{t\imaginary},\psi) \supset \AAa(y,\psi/4)$. Thus, if we divide the interval $[0,2\pi)$ into a finite number of intervals $I_k=[x_k,x_{k+1})$, $k=1, \ldots \bar k$ of same length $x_{k+1}-x_k\leq \psi/4$, $x_1=0$, $x_{\bar k + 1} = 2\pi$, we get 
$\P_\kappa [ \bigcap_{y \in S^{d-1}} \AAa(y,\psi)] 
\geq  \P_\kappa [ \bigcap_{k = 1 \ldots, \bar k} \AAa(e^{x_k\imaginary},\psi/4)]=1$.
For all $y \in S^{d-1}$ the events $\bigcap_{y \in S^{d-1}}\AAa(y,\psi)$ are monotone increasing as $\psi$ increases. Therefore, looking only at a countable number of angles $(\psi_n)_n$, $\psi_n \to 0$ as $n \to \infty$, we even have 
$\P_\kappa [ \Omega_1] 
=\P_\kappa [ \bigcap_{n \in \N} \bigcap_{y \in S^{d-1}} \AAa(y,\psi_n)]=1$. 

Let $\omega \in \Omega_1$. Let $D>0$ and define
\[
R_0:=4\sqrt\lambda_2/D + 1.
\]
Let $R \geq R_0$ and $\constpot, M \geq 0$, and let the potential $V=V_{c,R,M}$ be given as in \eqref{eq:def_pot}. Define $\middleconst_1:= \sqrt{2\constpot + 2\lambda_2/( R_0 - 1)^2}$, $\middleconst_2:=\alpha_\constpot(e_1) + D/2 $, and
\[
\varphi:= \min\{\arctan (D/(16\alpha_{\constpot + M}(e_1))),\arccos (\middleconst_1/\middleconst_2)\}.
\]
Note that by \eqref{e:lyapunovExponent_forconstpot} $0<\middleconst_1/\middleconst_2 <1$, and therefore $\varphi \in (0,\pi/2)$.

We restrict ourselves in the following to the case $y=e_1$. By rotation invariance of the law of Brownian motion, the same argument shows the statement for any $y \in S^{d-1}$.

Let $u \geq R+1$. We construct a path $\gamma$ starting in $0$ and leading to $ue_1$ as follows, see Figure \ref{fig:path}. We start the path in $0$ in direction of $e_2$ until hitting a line $\ell_\gamma=(r,\theta_\gamma)\in [\omega]$ of angle $\theta_\gamma \in [\pi-\varphi, \pi)$. Such a line exists by choice of $\omega \in \Omega_1 \subset \AAa(e_1,\varphi)$. We denote the intersection point by $p_1$. The path now follows the line $\ell_\gamma$ until the intersection of $\ell_\gamma$ with the line $\{ue_1 + s e_2 : \; s \in \R\}$. We denote this intersection point by $p_2$. Then the path follows this vertical line until hitting $ue_1$.

We divide the journey of the Brownian motion into three different parts, see also Figure~\ref{fig:path}: Define stopping times

\begin{align*}
\HHb := H_R(p_2) \circ \Theta_{\HHa} + \HHa,\
\HHc := H_1(ue_1) \circ \Theta_{\HHb} + \HHb,
\end{align*}
where $\Theta$ denotes the shift on the pathspace $\Sigma$, that is $\Theta_t((w_s)_{s\geq 0})=(w_{s+t})_{s \geq 0}$ for $w \in \Sigma$, $t \geq 0$. Let $\AAa:=\{ \tau_R(\ell_\gamma) \circ \Theta_{\HHa}> H_R(p_2) \circ \Theta_{\HHa}\}$.
We estimate and split the integral:
\begin{align}
&E_0[ \exp \{ -\int_0^{H_1(ue_1)} V_\omega(Z_s)ds\}]
\geq E_0[ \exp \{ -\int_0^{\HHc} V_\omega(Z_s)ds\}, \AAa]\nonumber\nonumber\\
&\qquad = E_0[ \exp \{ -\int_0^{\HHa} V_\omega(Z_s) ds - \int_{\HHa}^{\HHb} V_\omega(Z_s) ds - \int_{\HHb}^{\HHc} V_\omega(Z_s)ds\}, \AAa ]\label{eq:integral_devided}
\end{align}
The potential $V$ is bounded by $\constpot + M$, and on $\AAa$ for $\HHa \leq t \leq \HHb$ we have $V_\omega(Z_t)=\constpot$. All considered stopping times are $P_0$-a.s.\ finite. Thus, an application of the strong Markov property, see \cite[Theorem 2.6.15]{Karatzas1991}, shows that we can bound \eqref{eq:integral_devided} form below by
\begin{align}
& E_0[ \exp \{-(\constpot+ M) \HHa\}] 
\inf_{x \in  B_1(p_1)} E_x[\exp\{-\constpot H_R(p_2)\}, \tau_R(\ell_\gamma) > H_R(p_2)]\label{eq:appl_strongmarkprop}\\
&\qquad\cdot \inf_{x \in  B_R(p_2)} E_x[\exp\{-(\constpot+ M) H_1(ue_1)\}].\nonumber
\end{align}

\begin{figure}[h]
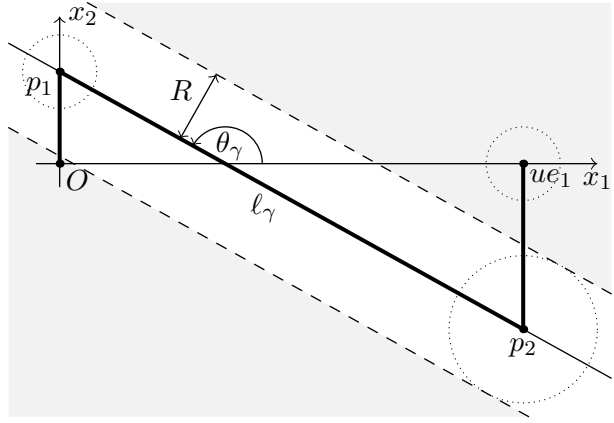

\includepathfigure
\caption{We see a small sector of $\R^2$. The only line of $\omega$ passing this sector is $\ell_\gamma$. The emphasised line segments illustrate the path $\gamma$ from $0$ to $ue_1$ through $p_1$ and $p_2$. $Z$ is observed until hitting $B_1(p_1)$, then until reaching $B_R(p_2)$ `forced' to stay in the region $Q_R(\ell_\gamma)$ of low potential, and thereafter until hitting $B_1(ue_1)$.}
\label{fig:path}
\end{figure}

We start estimating the middle term of \eqref{eq:appl_strongmarkprop}: Since $u \geq R+1$ we have $|p_2-p_1|=u/\cos(\pi-\theta_\gamma) \geq R+1$. For $x \in  B_1(p_1)$ we set $\bar x := x-p_1$. Since $R > 1$ we have $B_R(p_2) \supset B_{R-1}(p_2+\bar x)$ and $Q_R(\ell_\gamma) \supset Q_{R-1}(\ell_\gamma+\bar x)$. Therefore, for $x \in B_1(p_1)$,
\begin{align*}
&E_x[\exp\{-\constpot H_R(p_2)\}, \tau_R(\ell_\gamma) > H_R(p_2)]\\
&\qquad \geq E_x[\exp\{-\constpot H_{R-1}(p_2+\bar x)\}, \tau_{R-1}(\ell_\gamma+\bar x) > H_{R-1}(p_2+\bar x)].
\end{align*}
$\ell_\gamma + \bar x$ leads through $x$ and through $x + p_2-p_1$. The law of Brownian motion is invariant under translations and rotations. Thus Lemma \ref{lem:costsalongline} applied with radius $R-1$ shows for $R \geq R_0$ that the middle term in \eqref{eq:appl_strongmarkprop} can be bounded from below by
\begin{align}\label{eq:bound_along_lines}
C \exp\{-|p_2-p_1|\sqrt{2\constpot+2\lambda_2/(R_0-1)^2}\}.
\end{align}

In order to get a bound on the last term of \eqref{eq:appl_strongmarkprop} let $a_u:= u \tan(\pi - \theta_\gamma) - | p_1|$. Note that $0<\tan(\pi-\theta_\gamma) \leq \tan \varphi$, and $a_u \sim u\tan(\pi - \theta_\gamma)$ as $u \to \infty$. Note also that $a_u=|p_2-ue_1|$ if $a_u \geq 0$. Let $u_0 \geq R+1$ such that for $u \geq u_0$ one has $d(B_R(p_2),ue_1) \leq 2a_u$ and
\begin{align*}
- \frac{1}{u} \ln E_0[\exp\{-(\constpot + M)  H_1(2a_ue_1)\}]
\leq \frac{4a_u}{u}  \alpha_{\constpot + M}(e_1) \leq 8(\tan\varphi)\alpha_{\constpot + M}(e_1),
\end{align*}
where the first inequality is a consequence of the existence of the Lyapunov exponent for constant potential.  Then for $u \geq u_0$ the last term of \eqref{eq:appl_strongmarkprop} can be bounded from below by
\begin{align}\label{eq:choiceofn}
E_0[\exp\{-(\constpot+ M) H_1(2a_ue_1)\}]
\geq \exp\{- 8 u (\tan \varphi) \alpha_{\constpot + M}(e_1)\}.
\end{align}

Since the first term in \eqref{eq:appl_strongmarkprop} only depends on $\omega$, since $|p_2-p_1|=u/\cos (\pi-\theta_\gamma) \leq u/\cos \varphi$, we get with \eqref{eq:appl_strongmarkprop}, \eqref{eq:bound_along_lines} and \eqref{eq:choiceofn} for $R \geq R_0$ and for $u \geq u_0$,
\begin{align*}
& \limsup_{u \to \infty} - \frac{1}{u} \ln E_0[ \exp \{ -\int_0^{H_1(ue_1)} V_\omega(Z_s)ds\}]\\
& \qquad \leq \sqrt{2(\constpot+\lambda_2/(R_0-1)^2)}/\cos\varphi  + 8 (\tan\varphi) \alpha_{\constpot + M}(e_1)
\end{align*}
which is lower or equal $\alpha_\constpot(e_1)+D$ by the choice of $\varphi$.
\end{proof}

\subsection{Discontinuity with Respect to the Potential}

A slight modification of the previous setting also shows, that the Lyapunov cannot be continuous in general with respect to $L^p$-convergence of the potential, $1 \leq p < \infty$. Extend $\rep$ to $\exrep:=\rep\times[0,\infty)$. Let $\exOmega$ be the space of locally finite discrete measures on $\exrep$ provided with the topology of vague convergence and let $\exFF$ be the Borel $\sigma$-algebra. Let $\exP$ be the law of a homogeneous Poisson point process on $\exOmega$ with intensity measure $\leb \otimes \mu \otimes \leb$. Then $\R^2$ is acting on $\exrep$ via $\extau^{\exrep}_x: (r,\theta,s) \mapsto (\tau_x^\rep(r,\theta),s)$, where $x \in \R^2$. This again leads to an action $\extau$ of $\R^2$ on $\exOmega$ as before, under which $\exP$ is invariant and ergodic.

For $0<\kappa$ let $\Phi_\kappa$ be the mapping from $\exOmega$ to $\Omega$ defined as follows: We may represent discrete $\omega \in \exOmega$ as sums of Dirac measures: $\omega = \sum_{i \in \N} \delta_{(r_i,\theta_i,s_i)}$, see e.g.\ \cite[Proposition 9.1.III]{Daley2008}. We define for discrete $\omega \in \exOmega$ the mapping
\[
\Phi_\kappa: \, 
\omega=\sum_{i \in \N} \delta_{(r_i, \theta_i, s_i)} 
\mapsto 
\sum_{i\in \N: \, s_i < \kappa} \delta_{(r_i, \theta_i)}.
\]
Then $\Phi_\kappa(\exP) = \P_\kappa$.

We introduce for $\omega \in \exOmega$ and for $\kappa, R>0$ the potential
\[
\exV_{\kappa,R}(\omega):= V_{1,R,1} \circ \Phi_\kappa (\omega).
\]
For all $1 \leq p < \infty$ and $R>0$ we have
$
\exV_{\kappa,R} \to 2 \text{ in } L^p(\exP)
$
as $\kappa \to 0$. Indeed, 
\begin{align}
\exE[|2-\exV_{\kappa,R}|^p] 
&= \exP[\{\omega \in \exOmega:\,[\Phi_\kappa(\omega)]\cap B_R(0) \neq \emptyset\}]
= \P_\kappa[\{\omega \in \Omega:\,[\omega]\cap B_R(0) \neq \emptyset\}]\nonumber\\
&= \P_\kappa[\{\omega \in \Omega:\, \omega[[-R,R]\times(0,\pi]] \neq 0\}] 
= 1- e^{- 2 \kappa R} \to 0 \text{ as } \kappa \to 0,\label{eq:ex:counter2}
\end{align}
since for discrete $\omega \in \Omega$ one has $[\omega] \cap B_R(0) \neq \emptyset$ if and only if $\omega[[-R,R]\times(0,\pi]] \neq 0$.
On the other hand as after Theorem \ref{th:counterex} let $\exW_\kappa := (V_{1,2R,1} \circ \Phi_\kappa) \ast g$, then $\exW_\kappa$ is a \textit{regular potential} such that $\exW_\kappa \leq \exV_{\kappa,R}$. In order to clarify dependence on $\omega$ we introduce
\[
a(u,U,\omega):=-\ln E_0[\exp\{-\int_0^{H(ue_1)} U_\omega(Z_s)ds\}],
\]
where $u>0$, where $U$ is some potential on $\Omega$ or $\exOmega$, and where $\omega \in \Omega$ or $\omega \in \exOmega$ respectively. Then with \cite[(1.9)]{Ruess2013} and by Theorem \ref{th:counterex} applied to $D:=(\alpha_2(e_1)-\alpha_1(e_1))/2$, there is $R>0$ such that we have for $\kappa >0$ $\exP$-a.s.,
\begin{align*}
\alpha_{\exW_\kappa}(e_1)
& = \lim_{u \to \infty}\frac{1}{u}a(u,\exW_{\kappa},\omega) 
\leq \limsup_{u \to \infty}\frac{1}{u}a(u,\exV_{\kappa,R},\omega)\\
& = \limsup_{u \to \infty}\frac{1}{u}a(u,V_{1,R,1},\Phi_\kappa(\omega)) 
\leq \alpha_1(e_1) + D 
< \alpha_2(e_1).
\end{align*}
This, continuity of the convolution, see e.g.\ \cite[(4.11)]{Ruess2013}, and \eqref{eq:ex:counter2} show discontinuity as announced.

\subsection{Untypical Scaling.} \label{subsec:counterex:scaling}

The previous example can also be used, in order to show that in general convergence of a sequence of potentials $(V_n)_n$ to zero in the sense that there exists a potential $V$ with $nV_n \to V$ in $L^p$ for some $1 \leq p < \infty$ does not guarantee $\sqrt n \alpha_{V_n}(e_1) \to \sqrt{2\E V}$.

We consider $(\exOmega,\exFF,\exP,\extau)$ and define for $b,n \in \N$ the potential
\begin{align*}
\excontV_{n,b}:=\frac{1}{n} \exV_{1/n^2,bn} = V_{1/n,bn,1/n} \circ \Phi_{1/n^2}.
\end{align*}
Then for $1 \leq p < \infty$, for $b \in \N$, the potentials $n\excontV_{n,b}$ converge to $2$ in $L^p$ as $n \to \infty$. Indeed, as in \eqref{eq:ex:counter2},
\begin{align}
\exE[|n\excontV_{n,b} - 2|^p] 
&= \P_{1/n^2}[\{\omega\in \Omega:\; \omega[[-bn,bn]\times(0,\pi]]\neq 0\}] 
= 1-e^{-2bn/n^2} \to 0 \label{eq:counterex3}
\end{align}
as $n \to \infty$. On the other hand, set $\excontW_n:=\excontV_{n,2} \ast g$ where $g$ is given as after Theorem \ref{th:counterex}. Then $\excontW_n$ is a \textit{regular potential} and $\excontW_n \leq \excontV_{n,1}$. Theorem \ref{th:counterex} applied to $D_n:=4\sqrt{\lambda_2}/(n-1)$ shows for $R=n$, $\exP$-a.s.,
\begin{align*}
\sqrt n \alpha_{\excontW_n}(e_1)
& \leq \sqrt n \limsup_{u \to \infty} \frac{1}{u} a(u,\excontV_{n,1},\omega)\\
& = \sqrt n \limsup_{u \to \infty} \frac{1}{u} a(u,V_{1/n,n,1/n},\Phi_{1/n^2}(\omega))
\leq \sqrt 2 + \sqrt n D_n < \alpha_2(e_1)
\end{align*}
for $n$ large enough. This, continuity of the convolution, and \eqref{eq:counterex3} show that there is no typical scaling.

\vspace{0.5 cm}
\noindent
\thanks{\textbf{Acknowledgements:}
This work was funded by the ERC Starting Grant 208417-NCIRW.\\
Many thanks go to my supervisor Prof.\ Dr.\ Martin P.\ W.\ Zerner for his support. He suggested to me the core idea for the examples given in Section \ref{sec:counterex}. I am glad having had the opportunity to talk to Prof.\ Dr.\ Andrey L.\ Piatnitski about the topics of Section~\ref{sec:continuity}.

\small
\bibliographystyle{amsalpha}

\def\cprime{$'$} \def\cprime{$'$}
\providecommand{\bysame}{\leavevmode\hbox to3em{\hrulefill}\thinspace}
\providecommand{\MR}{\relax\ifhmode\unskip\space\fi MR }
\providecommand{\MRhref}[2]{%
  \href{http://www.ams.org/mathscinet-getitem?mr=#1}{#2}
}
\providecommand{\href}[2]{#2}

\end{document}